\numberwithin{equation}{section}
\newtheorem{thm}[equation]{Theorem}
\newtheorem{lem}[equation]{Lemma}
\newtheorem{cor}[equation]{Corollary}
\newtheorem{prop}[equation]{Proposition}
\theoremstyle{definition}
\newtheorem{ex}[equation]{Example}
\theoremstyle{remark}
\newtheorem{rem}[equation]{Remark}
\theoremstyle{remark}
\newtheorem{rems}[equation]{Remarks}
\DeclareMathOperator{\GL}{GL}
\DeclareMathOperator{\SL}{SL}
\DeclareMathOperator{\SP}{Sp}
\DeclareMathOperator{\Lie}{Lie}
\DeclareMathOperator{\Ad}{Ad}
\renewcommand{\Im}{{\rm Im}}
\newcommand{\ovl}{\overline}
\newcommand{\Gm}{\mathbb{G}_m}
\renewcommand{\AA}{\mathbb{A}}
\newcommand{\ZZ}{\mathbb{Z}}
\subjclass[2010]{20G15 (20G40, 20E45, 14L24)}
\keywords{$G$-irreducibility; $G$-complete reducibility; overgroups of regular unipotent elements; finite groups of Lie type}
\title[Overgroups]
{Overgroups of regular unipotent elements in reductive groups}
\dedicatory{Dedicated to Professor Jean-Pierre Serre on the occasion of his 95$^{\it th}$ birthday,\\ with great admiration.}
\author[M.\  Bate]{Michael Bate}
\address
{Department of Mathematics,
University of York,
York YO10 5DD,
United Kingdom}
\email{michael.bate@york.ac.uk}
\author[B.\ Martin]{Benjamin Martin}
\address
{Department of Mathematics,
University of Aberdeen,
King's College,
Fraser Noble Building,
Aberdeen AB24 3UE,
United Kingdom}
\email{b.martin@abdn.ac.uk}
\author[G. R\"ohrle]{Gerhard R\"ohrle}
\address
{Fakult\"at f\"ur Mathematik,
Ruhr-Universit\"at Bochum,
D-44780 Bochum, Germany}
\email{gerhard.roehrle@rub.de}
\begin{document}

\begin{abstract}
 We study reductive subgroups $H$ of a reductive linear algebraic group $G$ --- possibly non-connected --- such that $H$ contains a regular unipotent element of $G$.  We show that under suitable hypotheses, such subgroups are $G$-irreducible in the sense of Serre.  This generalizes results of Malle, Testerman and Zalesski.  We obtain analogous results for Lie algebras and for finite groups of Lie type.  Our proofs are short, conceptual and uniform.
\end{abstract}

\maketitle

\section{Introduction}
\label{sec:intro}

Much effort has gone into describing the subgroup structure of reductive algebraic groups.  In this paper we study reductive subgroups containing a regular unipotent element of the ambient group.  
For simple $G$, Saxl and Seitz determined the maximal closed positive-dimensional subgroups containing a regular unipotent element of $G$ in \cite{saxl-seitz}, building on work of Suprunenko \cite{suprunenko}. 
Subsequently, these classifications have been extended and refined, for example by Testerman-Zalesski \cite{TZ}, Guralnick-Malle \cite{GM}, and Craven \cite{craven}, so that there is now a very good understanding of how subgroups containing regular unipotent elements can arise ``in nature''.

Using these classification results, Testerman and Zalesski proved the following striking result in \cite[Thm.~1.2]{TZ}: if $G$ is connected and $H$ is a connected reductive subgroup of $G$ containing a regular unipotent element of $G$ then $H$ is $G$-irreducible in the sense defined by J-P. Serre (i.e., is not contained in any proper parabolic subgroup of $G$).  Note that this is false if we replace ``regular unipotent'' with ``regular semisimple'': just take $H$ to be a Levi subgroup of a proper parabolic subgroup of $G$; then $H$ is connected reductive, contains a maximal torus of $G$ (and hence regular semisimple elements of $G$), but $H$ is not $G$-irreducible.

Malle and Testerman extended this result to non-connected $H$ inside simple $G$ \cite[Thm.~1]{MT}, and also considered a few cases when $G$ is non-connected.  (The notion of a regular unipotent element of a non-connected reductive group $G$ was introduced by Spaltenstein: see Section~\ref{sec:regunipotent} below.)  The proofs of \cite[Thm.~1.2]{TZ} and \cite[Thm.~1]{MT} involved long and intricate case-by-case considerations for the various possible Dynkin types of $G$.  

The first purpose of this paper is to give a short and uniform proof of the following more general result.
For the definitions of $G$-complete reducibility and $G$-irreducibility, see Section~\ref{sec:Gcr}.

\begin{thm}
\label{thm:main}
Let $H \subseteq G$ be reductive groups (possibly non-connected). 
Suppose $H$ contains a regular unipotent element of $G$.
Then:
\begin{itemize}
\item[(i)] The identity component $H^0$ of $H$ is $G$-completely reducible.
\item[(ii)] If the projection of $H^0$ onto each simple factor of the identity component $G^0$ of $G$ is not a torus, then $H$ does not normalize any proper parabolic subgroup of $G^0$.  
\item[(iii)] If the hypothesis of (ii) holds and $H$ meets every connected component of $G$, then $H$ is $G$-irreducible.
\end{itemize}
\end{thm}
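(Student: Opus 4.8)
The plan is to derive all three parts from one rigidity property of regular unipotent elements, together with the cocharacter description of $G$-complete reducibility. Before starting, two cost-free reductions: replacing $G$ by $\langle G^0, u\rangle$ and $H$ by $\langle H^0, u\rangle$ (with $u\in H$ the given regular unipotent element of $G$) changes neither $G^0$ nor $H^0$, keeps $H$ reductive and $u$ regular unipotent, and makes $G/G^0$ and $H/H^0$ cyclic; in particular $H$ then meets every connected component of $G$, so the extra hypothesis in (iii) is automatic afterwards. The rigidity property I would record is: \emph{if $\lambda\in Y(G)$ is a cocharacter with $u\in Z_G(\lambda)$, then $Z_{G^0}(\lambda)=G^0$}; equivalently, $u$ lies in no R-Levi subgroup $\mathcal L$ of $G$ with $\mathcal L^0\subsetneq G^0$. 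Indeed $\lambda(\Gm)$ then lies in the connected group $C_G(u)^0=C_{G^0}(u)^0$, which by the description of centralisers of regular unipotent elements (Springer--Steinberg, Spaltenstein) contains no non-central subtorus; so $\lambda$ is central and $Z_{G^0}(\lambda)=G^0$.

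Granting (i), I would obtain (ii) and (iii) as follows. Suppose $H\subseteq\mathcal P$ for a proper R-parabolic subgroup $\mathcal P$ of $G$. By (i), $H^0$ lies in some R-Levi subgroup of $\mathcal P$; the hypothesis of (ii) is used to make this R-Levi subgroup unique. The R-Levi subgroups of $\mathcal P$ containing $H^0$ form a torsor under the unipotent group $C_{R_u(\mathcal P)}(H^0)$, and since the projection of $H^0$ onto each simple factor of $G^0$ is not a torus one checks this group is trivial --- here one uses that $H^0$ is $G$-completely reducible, so that $C_{G^0}(H^0)$ is reductive, together with the fact that the central torus of a proper Levi subgroup acts with strictly positive weights on its unipotent radical. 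Being unique, this R-Levi subgroup $\mathcal L$ is normalised by $N_G(\mathcal P)\cap N_G(H^0)\supseteq H$, so $u\in H\subseteq\mathcal L$; but $\mathcal L^0\subsetneq G^0$ (if $\mathcal P^0=G^0$ then, after the reduction above, the proper R-parabolic $\mathcal P$ would miss a component of $G$, contradicting $H\subseteq\mathcal P$), which contradicts the rigidity property. This proves (iii); part (ii) is the same argument applied to $\mathcal P=N_G(P)$, whose identity component is the proper parabolic $P\subsetneq G^0$.

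For (i), one first reduces to showing $H^0$ is $G^0$-completely reducible --- for subgroups of $G^0$ this is equivalent to being $G$-completely reducible --- and this can be tested one simple factor of $G^0$ at a time, so the factors on which $H^0$ projects to a torus are trivial and we may assume the hypothesis of (ii). The remaining assertion I would prove by induction on $\dim G$: if $H^0$ is not $G^0$-completely reducible, its canonical destabilising parabolic subgroup $P$ is intrinsic to $H^0$, hence normalised by $N_G(H^0)\ni u$, so that $u$ lies in $\mathcal P:=N_G(P)$; the image of $u$ in the Levi quotient $\mathcal P/R_u(\mathcal P)$ is again regular unipotent (a descent property of the limit map $c_\lambda$ that I would prove in the preliminaries), this quotient has smaller dimension, and feeding the inductive statements back through $c_\lambda$ contradicts the assumed failure of $G^0$-complete reducibility of $H^0$.

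The step I expect to be the main obstacle is the claim, in the second paragraph, that the non-toral hypothesis forces $C_{R_u(\mathcal P)}(H^0)=1$, equivalently that the $G$-completely reducible subgroup $H^0$ lies in a \emph{unique} R-Levi subgroup of any R-parabolic subgroup containing it, so that $G$-complete reducibility genuinely passes from $H^0$ to $H$. This is exactly the place where the size of $H^0$ must be exploited rather than merely its reductivity, and it is what fails for the small examples (such as $H=\langle u\rangle\subseteq G=\mathrm{PGL}_2$ in characteristic $3$) that show the hypothesis in (ii) cannot be dropped. A further, more bookkeeping-level, difficulty running throughout is Spaltenstein's phenomenon that a regular unipotent element of a non-connected group need not lie in its identity component, which blocks a naive reduction to the connected case of Testerman--Zalesski and Malle--Testerman.
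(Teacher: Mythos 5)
Your argument for (ii) and (iii) hinges on the claim that the non-toral hypothesis forces $C_{R_u(\mathcal P)}(H^0)=1$, so that $H^0$ lies in a \emph{unique} R-Levi subgroup of $\mathcal P$. This claim is false as you justify it, and indeed false under the hypotheses you invoke ($H^0$ connected reductive, $G$-completely reducible, with non-toral projections). Take $G=\SL_4$ and $H^0=\SL_2$ acting on $V=V_2\otimes W$ through the first tensor factor, i.e.\ as $V_2\oplus V_2$; then $H^0$ is $G$-cr and is not a torus, but $C_G(H^0)^0$ is a positive-dimensional reductive group acting on $W$, and for $\mu(t)=\mathrm{id}_{V_2}\otimes\mathrm{diag}(t,t^{-1})$ the group $C_{R_u(P_\mu)}(H^0)\cong\Gm$-conjugates of $\mathrm{id}\otimes\bigl(\begin{smallmatrix}1&*\\0&1\end{smallmatrix}\bigr)$ is a nontrivial copy of the additive group, so $H^0$ lies in a one-parameter family of Levi subgroups of $P_\mu$. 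Reductivity of $C_{G^0}(H^0)$ cannot help here, since reductive groups contain nontrivial unipotent subgroups; and the central torus of $L_\mu$ is of no use because $H^0$ need not contain it. (This example does not contradict the theorem because this $H^0$ contains no regular unipotent element of $\SL_4$; but your step never uses the regular unipotent element, so nothing rescues it. Nor can you fall back on choosing an $H$-stable Levi from the $C_{R_u(\mathcal P)}(H^0)$-torsor: a unipotent element acting on a torsor under a connected unipotent group need not have a fixed point in characteristic $p$.) Your reduction for (i) is also incomplete at the final step: knowing by induction that the image $c_\lambda(H^0)$ in the Levi quotient is completely reducible does not contradict the failure of complete reducibility of $H^0$ itself --- the semisimplification of a non-$G$-cr subgroup is always $G$-cr, just not conjugate to the subgroup --- so the induction does not close without a further idea.

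The ingredient you are missing is the one the paper is built on: a regular unipotent element $u$ normalizes a \emph{unique} Borel subgroup of $G^0$ (Steinberg, Spaltenstein), hence every R-parabolic subgroup of $G$ containing $u$ contains that Borel subgroup in its identity component. One then produces \emph{two} regular unipotent elements $u_1,u_2=xu_1x^{-1}$ of $G$ inside $H$, lying in opposite Borel subgroups $B, B^-$ of $H$ (written as $P_\lambda(H)$ and $P_{-\lambda}(H)$); any R-parabolic subgroup of $G$ containing $H$ then contains $R_u(P_\lambda)\cup R_u(P_{-\lambda})$, and these opposite unipotent radicals generate a connected normal, hence $G$-cr, subgroup of $G^0$ --- all of $[G^0,G^0]$ once Lemma~\ref{lem:torimage} rules out simple factors lying in $P_{\pm\lambda}$. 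This single mechanism yields (ii) and (iii) directly, and combined with the optimal destabilizing parabolic of $[H^0,H^0]$ (which you correctly identified) it yields (i): the subgroup $M$ generated by the two opposite unipotent radicals lies in the optimal parabolic, is $G$-cr, hence lies in a Levi thereof, and contains all root groups of $H^0$, contradicting optimality. Your ``rigidity property'' (distinguishedness of regular unipotent elements) is correct but in the paper it only carries the much easier Corollary~\ref{cor:GcrGir}, not the main theorem.
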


The key ingredient in our proof is the observation, due to Steinberg 
 (for connected $G$, \cite[Sec.~3.7, Thm.~1]{steinberg:conjugacy}) and Spaltenstein (for non-connected $G$, \cite[Prop.~II.10.2]{spaltenstein}), that a regular unipotent element normalizes a unique Borel subgroup of $G^0$.

\begin{rems}
	\label{rem:main}	
	(i). The conclusion of part (i) follows from \cite[Thm.~3.10]{BMR} whenever (ii) or (iii) holds, because $H^0$ is normal in $H$. Note, however, that (i) in fact holds in complete generality --- i.e., without the additional hypotheses of (ii) and (iii). For non-connected $H$, this is a new result even in the case that $G$ itself is connected.  
	See also Corollary \ref{cor:connectedcr} below.
		
	(ii). Note that the hypotheses in Theorem~\ref{thm:main}(ii) and (iii) are automatic if $H$ and $G$ are both connected: for if $H$ is connected and contains a regular unipotent element of $G$, 
	then $H$ cannot project to a torus in any simple factor of $G$, and if $G$ is connected, then $H$ meets every component of $G$. 
	Hence Theorem~\ref{thm:main} specializes to \cite[Thm.~1.2]{TZ} in this case.
	
	(iii). For $G$ simple, we recover \cite[Thm.~1]{MT}, and for $G^0$ simple we get 
	\cite[Cor.~6.2]{MT}.
		
	(iv). We note that the restriction on $H^0$ in (ii) and (iii) is necessary. 
	For let $G$ be connected in positive characteristic and let $H$ be the closed subgroup of $G$ generated by a regular unipotent element $u$ of $G$. 
	Then, since $u$ is contained in a unique Borel subgroup $B$ of $G$, \cite[Sec.~3.7, Thm.~1]{steinberg:conjugacy}, so is $H$, and so $H$ is not $G$-irreducible. For  instances of a positive-dimensional reductive subgroup $H$  containing a regular unipotent element of $G$ which is not $G$-irreducible, see 
	\cite[Sec.~7]{MT}.
	
	(v). We observed above that (ii) and (iii) can fail if we replace ``regular unipotent'' with ``regular semisimple''.  In fact, (i) can also fail: e.g., take $H$ to be the image of the adjoint representation of $\SL_2$ in $G= \SL_3$ in characteristic~$2$ (note that since $H$ does not act completely reducibly on the natural module for $G$, $H$ is not $G$-cr: see \cite[Sec.~1]{BMR:semisimplification}).
\end{rems}

Many of the technicalities in the proof of Theorem~\ref{thm:main} disappear in the special case where both $G$ and the reductive subgroup $H$ are connected.  We give a separate short proof in this case which uses only very basic properties of reductive groups and regular unipotent elements.  It illustrates some of the key ideas of the general case, and a slight variation gives an analogous result for Lie algebras when $\Lie(H)$ contains a regular nilpotent element of $\Lie(G)$ (see Theorem~\ref{thm:connected}).

Our second main result is an analogue of Theorem~\ref{thm:main} for finite groups of Lie type.
Suppose $G$ is a connected reductive group and recall that a \emph{Steinberg endomorphism} of $G$ is a surjective morphism $\sigma:G\to G$ such that the 
corresponding fixed point subgroup $G_\sigma :=\{g \in G \mid \sigma(g) = g\}$ of $G$ is finite; 
Frobenius endomorphisms of reductive groups over finite fields are familiar examples, giving rise to \emph{finite groups of Lie type}, see \cite[Sec.~10]{steinberg:end}. 
Let $\sigma$ be a Steinberg endomorphism of $G$ and
suppose $H$ is a connected reductive $\sigma$-stable subgroup of $G$. 
Then $\sigma$ is also a Steinberg endomorphism for $H$ with finite fixed point subgroup  $H_\sigma = H \cap G_\sigma$, \cite[7.1(b)]{steinberg:end}.
Obviously, one cannot immediately appeal to Theorem~\ref{thm:main} to deduce anything about $H_\sigma$, because $H_\sigma^0$ is trivial.
However, our proof does still go through with some minor changes. 
We give here the version of the result for connected groups; see Proposition~\ref{prop:steinbergendgeneral} for the most general analogue of Theorem~\ref{thm:main} in this setting.

\begin{thm}
	\label{thm:finite}	
	Let $H \subseteq G$ be connected reductive groups 
	and suppose $\sigma$ is a Steinberg endomorphism of $G$ such that $H$ is $\sigma$-stable. 
	Suppose that $H$ contains a regular unipotent element of $G$. Then 
	$H_\sigma$ is $G$-irreducible. 
\end{thm}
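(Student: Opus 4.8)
The plan is to deduce Theorem~\ref{thm:finite} from the two facts that drive the proof of Theorem~\ref{thm:main} --- that a regular unipotent element normalizes a \emph{unique} Borel subgroup of $G$, and that a regular unipotent element of $G$ lies in no \emph{proper} Levi subgroup of $G$ --- with the passage from the algebraic group $H$ to the finite group $H_\sigma$ handled by the Lang--Steinberg theorem.

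The first step is to show that $H_\sigma$ itself contains a regular unipotent element of $G$. The regular unipotent elements of the connected reductive group $H$ form a single $H$-conjugacy class $\mathcal{C}$; since $H$ contains a regular unipotent element of $G$ and this property is invariant under conjugation by $H$, every element of $\mathcal{C}$ is regular unipotent in $G$. A Steinberg endomorphism is a bijective endomorphism, so it preserves unipotency and, as $\sigma(C_G(g)) = C_G(\sigma(g))$ with $\sigma$ restricting to a bijective morphism between these centralizers, it preserves centralizer dimensions; hence $\sigma$ permutes the regular unipotent elements of $H$, so $\mathcal{C}$ is $\sigma$-stable. Applying Lang--Steinberg to the connected group $H$ (write $\sigma(v_0) = a^{-1}v_0a$ for $v_0\in\mathcal C$, $a\in H$, then $a = x^{-1}\sigma(x)$, so $xv_0x^{-1}\in\mathcal C\cap H_\sigma$) produces a regular unipotent element $v$ of $G$ lying in $H_\sigma$.

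Now suppose, for a contradiction, that $H_\sigma$ is contained in a proper parabolic subgroup $P$ of $G$. Since $v\in P$ is unipotent it lies in a Borel subgroup of $P$, which is a Borel subgroup of $G$ and hence, by Steinberg's uniqueness result, equals the unique Borel subgroup $B$ of $G$ normalized by $v$; thus $B\subseteq P$, and $B$ is $\sigma$-stable because $\sigma(B)$ is again a Borel subgroup normalized by $\sigma(v)=v$. Replacing $P$ by the stable value of the descending chain $P \supseteq P\cap\sigma(P) \supseteq P\cap\sigma(P)\cap\sigma^2(P) \supseteq \cdots$ --- each term a parabolic subgroup since all contain $B$, the chain stabilizing, and its limit $\sigma$-stable because $\sigma$ preserves dimension --- we may assume $P$ is $\sigma$-stable, and then (Lang--Steinberg applied to the $\sigma$-stable $R_u(P)$-homogeneous space of Levi subgroups of $P$) we may fix a $\sigma$-stable Levi subgroup $L$ of $P$. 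The key step is now the finite-group analogue of $G$-complete reducibility: that $H_\sigma$ is conjugate, by an element of $R_u(P)_\sigma$, into $L_\sigma$. Granting this, $L$ is a proper Levi subgroup of $G$ and $v\in L$; since the rank of $L$ equals that of $G$ one has $\dim C_L(v)\geq\operatorname{rank}L=\operatorname{rank}G=\dim C_G(v)\geq\dim C_L(v)$, so $v$ is regular unipotent in $L$ as well, and then $Z(L)^0$, being a torus contained in $C_G(v)^0$, lies in its unique maximal torus $Z(G)^0$ --- contradicting $Z(L)^0\supsetneq Z(G)^0$.

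The step I expect to be the main obstacle is precisely the descent of $H_\sigma$ into the Levi subgroup $L$. For the connected group $H$ this is Theorem~\ref{thm:main}(i), but that cannot be quoted here since $H_\sigma^0$ is trivial; one must instead run the argument at the level of $\sigma$-fixed points. I would use that $H$ is reductive to see $H_\sigma\cap R_u(P)$ is a normal $p$-subgroup of $H_\sigma$ contained in $O_p(H_\sigma)=1$, so $H_\sigma$ maps isomorphically onto its image in $L$, and then combine Lang--Steinberg (to split the Levi decomposition of $P$ $\sigma$-equivariantly) with a conjugacy-of-complements argument --- kept under control by the uniqueness of the Borel $B$ normalized by $v$ --- to move that image back onto $H_\sigma$ inside $P_\sigma$. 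Once this technical heart is in place the contradiction above, hence the theorem, follows, and the same scheme should give the general statement of Proposition~\ref{prop:steinbergendgeneral}.
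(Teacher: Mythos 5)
Your first step --- producing a regular unipotent element of $G$ inside $H_\sigma$ by applying Lang--Steinberg to the single $H$-conjugacy class of regular unipotent elements of the connected group $H$ --- is correct, and is exactly how the paper's proof begins (it cites \cite[III.1.19]{springersteinberg}). The genuine gap is the step you yourself flag as the main obstacle: the claim that $H_\sigma$ can be conjugated by an element of $R_u(P)_\sigma$ into a $\sigma$-stable Levi subgroup $L$ of $P$. This is not a technicality that a ``conjugacy-of-complements argument'' will supply; it is essentially the assertion that $H_\sigma$ is $G$-completely reducible, which is as hard as the theorem itself, and the ingredients you propose are demonstrably insufficient. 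Indeed, the image of ${\rm PSL}_2(p)$ acting on its $p$-dimensional reducible indecomposable module lies in a proper parabolic subgroup $P$ of $\GL_p$, contains a regular unipotent element (hence has its unique normalized Borel subgroup), is simple (so meets $R_u(P)$ trivially and has $O_p=1$), and yet is \emph{not} conjugate into any Levi subgroup of $P$ --- this is precisely the example discussed after Theorem~\ref{thm:finite}. So triviality of $H_\sigma\cap R_u(P)$ together with uniqueness of the Borel subgroup normalized by $v$ cannot force the splitting; a correct argument must use the ambient connected $\sigma$-stable $H$ inside the descent step itself, and your sketch does not do so. (The remaining pieces of your outline are sound: the reduction to a $\sigma$-stable $P$ containing the $\sigma$-stable Borel subgroup $B$ of $G$, and the closing observation that a regular unipotent element is distinguished and therefore lies in no proper Levi subgroup of $G$.)

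The paper avoids any such descent. Having found $u\in H_\sigma$ regular unipotent in $G$ (hence in $H$), it notes that the unique Borel subgroup $B$ of $H$ containing $u$ is $\sigma$-stable, chooses a $\sigma$-stable maximal torus $S\subseteq B$ by Lemma~\ref{lem:fixedboreltorus}(i), and uses Lemma~\ref{lem:fixedboreltorus}(ii) to produce $x\in H_\sigma$ conjugating $B$ to the opposite Borel subgroup $B^-$ of $H$. Then $v=xux^{-1}\in H_\sigma$ is a second regular unipotent element of $G$, and the unique Borel subgroups of $G$ normalized by $u$ and by $v$ are opposite, exactly as in the proof of Theorem~\ref{thm:connected}; any parabolic subgroup of $G$ containing $H_\sigma$ must contain both of these opposite Borel subgroups and hence equals $G$. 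This ``two opposite regular unipotent elements'' device is the idea missing from your proposal.
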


As is pointed out in \cite[Sec.~1]{BT}, there are instances where one can embed a finite group of Lie type into a connected reductive group 
$G$ so that the image contains a regular unipotent element of $G$ but is \emph{not} $G$-irreducible.
For example, ${\rm PSL}_2(p)$ has a $p$-dimensional reducible indecomposable representation $V$ such that the image contains an element acting as a single Jordan block on $V$, and hence the image in $\GL(V)$ contains a regular unipotent element but is not $G$-irreducible; see \cite[Sec.~2.1]{BT} and \cite[p48]{alperin}. 
Theorem~\ref{thm:finite} shows that such a finite subgroup cannot arise as the fixed point subgroup of a connected reductive $\sigma$-stable subgroup $H$ of $G$ (since a subgroup $M$ of $\GL_n$ is $\GL_n$-irreducible if and only if the corresponding representation of $M$ is irreducible).
This was proved for exceptional simple $G$ and subgroups isomorphic to ${\rm PSL}_2(p)$ by an exhaustive case check in \cite[Thm.~2]{BT}; 
our result holds for arbitrary reductive $G$ and finite subgroups of arbitrary Lie type.

\bigskip
The proofs of Theorems~\ref{thm:main} and \ref{thm:finite} use the machinery of 
$G$-complete reducibility and optimality developed by the authors and others in a series of papers \cite{cochar, BMR, BMR:commuting, BMR:semisimplification, BMRT, GIT}.  
This yields, for instance, a very quick way to see Theorem~\ref{thm:main} in characteristic 0 (Remark~\ref{rem:char0}).  These methods are particularly well-suited to dealing with non-connected $G$.  We prove Theorem~\ref{thm:main} in full generality in Section~\ref{sec:proof};  
the shorter argument for connected $G$ and $H$ is given in Section~\ref{sec:conn}.     

During the build-up to the main proof, we show that the notion of regular unipotent element behaves well when passing to quotients and reductive subgroups of $G$ (Section~\ref{sec:regunipotent}); we believe this is of independent interest.
We also give some natural
examples in Section~\ref{sec:complements} 
where $H^0$ is a torus --- so the hypotheses of Theorem~\ref{thm:main} fail --- but $H$ is still $G$-irreducible.

\section{Preliminaries}
\label{sec:prelims}

Throughout, we work over an algebraically closed field $k$ of characteristic $p\geq 0$. 
A linear algebraic group $H$ over $k$ has identity component $H^0$; if $H=H^0$, then we say that $H$ is \emph{connected}.
We denote by $R_u(H)$ the \emph{unipotent radical} of $H$; if $R_u(H)$ is trivial, then we say $H$ is \emph{reductive} --- 
we do not insist that a reductive group is connected.
The derived subgroup of $H$ is denoted by $[H,H]$, the centre of $H$ by $Z(H)$, and its Lie algebra by $\Lie(H)$.

Throughout, $G$ denotes a reductive linear algebraic group over $k$. 
The semisimple group $[G^0,G^0]$ can be written as a product $G_1\cdots G_r$ of pairwise commuting simple groups $G_1,\ldots, G_r$; these are the \emph{simple factors of $G^0$}.
For each $i$ there is a surjective homomorphism from $G^0$ onto a quotient of $G_i$ by a finite subgroup; we call this map \emph{projection of $G^0$ onto the $i$th simple factor}.  
Given any element $g\in G$, the $G^0$-conjugacy class of $g$ is denoted by $G^0 \cdot g$; the Zariski closure of this class is denoted $\ovl{G^0 \cdot g}$.

\subsection{Endomorphisms}
\label{sec:steinbergend}

We give two results of Steinberg \cite{steinberg:end} which are used in the sequel.

\begin{lem}
	\label{lem:fixedborel}
	Let $H$ be a linear algebraic group and let $\sigma:H\to H$ be any surjective homomorphism. Then $\sigma$ stabilizes a Borel subgroup of $H$.
	In particular, for every $x\in H$ there is a Borel subgroup of $H$ normalized by $x$.
\end{lem}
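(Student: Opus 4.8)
The plan is to reduce first to the case that $H$ is connected, then to reformulate the assertion as a fixed-point statement on the variety of Borel subgroups of $H$, and finally to deduce the ``in particular'' clause for free.

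First I would note that $\sigma$ has finite kernel: since $\sigma(H)=H$ and $\sigma(H)$ is closed, $\dim\ker\sigma=\dim H-\dim\sigma(H)=0$. Hence $\sigma(H^0)$ is a closed connected subgroup of $H^0$ of dimension $\dim H^0$, so $\sigma(H^0)=H^0$ and $\sigma$ restricts to a surjective homomorphism $H^0\to H^0$. Since the Borel subgroups of $H$ are precisely the Borel subgroups of $H^0$, it suffices to produce a $\sigma$-stable Borel subgroup of $H^0$, and so I may assume $H$ is connected.

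Next I would recast the problem geometrically. Write $\CB$ for the projective variety of Borel subgroups of $H$ and fix $B_0\in\CB$, identifying $\CB$ with $H/B_0$ via $hB_0\mapsto hB_0h^{-1}$. For any $B'\in\CB$ the image $\sigma(B')$ is a closed connected solvable subgroup of dimension $\dim B'$, hence again a Borel subgroup; in particular $\sigma(B_0)=xB_0x^{-1}$ for some $x\in H$. One then checks that under this identification the map $B'\mapsto\sigma(B')$ becomes the morphism $\psi\colon H/B_0\to H/B_0$, $hB_0\mapsto\sigma(h)xB_0$ --- this is well defined because $\sigma(B_0)=xB_0x^{-1}$, it is a morphism since $H\to H/B_0$ is a quotient morphism, and it is surjective because $\sigma$ is. Moreover a fixed point $hB_0$ of $\psi$ gives $\sigma(h)x=hb$ with $b\in B_0$, whence $\sigma(hB_0h^{-1})=hB_0h^{-1}$; so the whole problem reduces to showing that the surjective self-morphism $\psi$ of the flag variety $\CB$ has a fixed point. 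This is Steinberg's theorem \cite{steinberg:end}, and I would obtain it by the customary reductions: passing to the Levi quotient of a minimal $\sigma$-stable parabolic subgroup (induction on $\dim H$) reduces to $H$ semisimple; grouping the simple factors of $H$ into $\sigma$-orbits then reduces to $H$ simple (for a power of $\sigma$), where the existence of a $\sigma$-stable Borel subgroup is proved directly.

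Finally, the ``in particular'' clause is immediate: given $x\in H$, apply the first statement to the surjective homomorphism $\sigma=\mathrm{Inn}(x)\colon H\to H$, $h\mapsto xhx^{-1}$; a $\sigma$-stable Borel subgroup $B$ then satisfies $xBx^{-1}=B$, i.e.\ is normalized by $x$. The main obstacle throughout is the last step above --- the simple-group case of Steinberg's fixed-point theorem; in a write-up I would simply cite \cite{steinberg:end} for it (it can also be extracted from a Lefschetz fixed-point count on $\CB$, whose cohomology is concentrated in even degrees, or from the Lang--Steinberg theorem in the case $H_\sigma$ is finite). Everything else is formal.
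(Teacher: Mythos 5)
Your proposal is correct and takes essentially the same route as the paper: all the real content is delegated to Steinberg's fixed-point theorem \cite[Thm.~7.2]{steinberg:end}, and the ``in particular'' clause is obtained exactly as in the paper by applying the first statement to the inner automorphism $h\mapsto xhx^{-1}$. The reductions you sketch (to connected $H$, then to a fixed point of a surjective self-map of the flag variety) are in effect part of Steinberg's own proof and do not change the fact that the decisive step is the citation the paper already makes.
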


\begin{proof}
The first statement is precisely \cite[Thm.~7.2]{steinberg:end}. The second follows by applying this to the endomorphism given by conjugation by $x$.
\end{proof}

Recall that a \emph{Steinberg endomorphism} of a linear algebraic group $H$ is a surjective endomorphism $\sigma:H\to H$ such that the fixed point subgroup 
$H_\sigma$ is finite.
As noted in Section~\ref{sec:intro}, if $\sigma$ is a Steinberg endomorphism of $H$, then the restriction of $\sigma$ to $H^0$ is a Steinberg endomorphism of $H^0$.
Hence we may deduce the following by applying \cite[10.4, Cor.~10.10]{steinberg:end} to $H^0$.

\begin{lem}
\label{lem:fixedboreltorus}
Let $H$ be a linear algebraic group and $\sigma$ a Steinberg endomorphism of $H$.
\begin{itemize}
\item[(i)] Each $\sigma$-stable Borel subgroup of $H$ contains a $\sigma$-stable maximal torus.
\item[(ii)] Any two pairs consisting of a $\sigma$-stable Borel subgroup and a $\sigma$-stable maximal torus of $H$ are conjugate by an element of $(H^0)_\sigma$.
\end{itemize}
\end{lem}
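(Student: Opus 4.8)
The plan is to reduce both statements to the identity component $H^0$, where they are standard consequences of Steinberg's structure theory for connected groups carrying a Steinberg endomorphism, ultimately of the Lang--Steinberg theorem \cite[10.4, Cor.~10.10]{steinberg:end}.

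The first observation to record is that Borel subgroups and maximal tori of $H$ are precisely those of $H^0$: a Borel subgroup of $H$ is a maximal closed connected solvable subgroup, hence lies in $H^0$, and the same goes for maximal tori. Since the restriction of $\sigma$ to $H^0$ is again a Steinberg endomorphism (as recalled just before the statement), being $\sigma$-stable is the same requirement whether interpreted in $H$ or in $H^0$; in particular a $\sigma$-stable pair consisting of a Borel subgroup and a maximal torus for $H$ is exactly such a pair for $H^0$. So it suffices to prove (i) and (ii) with $H$ replaced by the connected group $H^0$.

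For (i), fix a $\sigma$-stable Borel subgroup $B$ of $H$ and any maximal torus $T$ of $B$. As $\sigma$ stabilizes $B$, it sends $T$ to another maximal torus of $B$, so $\sigma(T) = uTu^{-1}$ for some $u \in R_u(B)$. The restriction of $\sigma$ to the connected unipotent group $R_u(B)$ is a surjective endomorphism with finite fixed-point set, so the Lang--Steinberg theorem provides an element $v \in R_u(B)$ expressing $u$ in the form $v^{-1}\sigma(v)$ (after adjusting by an element centralizing $T$, if necessary), and a short computation then shows that $v^{-1}Tv$ is $\sigma$-stable; this is exactly the argument of \cite[10.4]{steinberg:end}, which may simply be quoted. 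For (ii), I would realize the set of all pairs $(B,T)$ (with $B$ a Borel subgroup of $H^0$ and $T \subseteq B$ a maximal torus) as a homogeneous space $H^0/S$, where $S = N_{H^0}(B) \cap N_{H^0}(T) = N_B(T)$ is the stabilizer of a fixed such pair; $S$ is connected. The action of $\sigma$ on $H^0/S$ has the $\sigma$-stable pairs for its set of fixed points, and since $S$ is connected the Lang--Steinberg theorem forces this fixed-point set to be a single $(H^0)_\sigma$-orbit; this is \cite[Cor.~10.10]{steinberg:end}. Hence any two $\sigma$-stable pairs are conjugate by an element of $(H^0)_\sigma$, as claimed.

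I do not anticipate a genuine obstacle here: all of the substance is contained in Steinberg's cited results. The only points requiring care are the bookkeeping of the reduction to $H^0$ and checking that each auxiliary group to which one applies the Lang--Steinberg theorem --- $R_u(B)$ in (i), and the point stabilizer $S = N_B(T)$ in (ii) --- is connected and inherits a surjective endomorphism with finite fixed-point set.
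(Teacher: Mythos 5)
Your proposal is correct and follows essentially the same route as the paper: the paper's proof consists precisely of noting that the restriction of $\sigma$ to $H^0$ is again a Steinberg endomorphism and then citing \cite[10.4, Cor.~10.10]{steinberg:end} applied to $H^0$. The extra detail you supply (Lang--Steinberg applied to $R_u(B)$ and to the variety of pairs $(B,T)$) is just an unpacking of those cited results.
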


\subsection{Cocharacters and R-parabolic subgroups}
For a linear algebraic group $H$, we let $Y(H)$ denote the set of cocharacters of $H$; that is, the set of algebraic group homomorphisms $\lambda:\Gm\to H$.
The group $H$ acts on the set of cocharacters: for $\lambda\in Y(H)$ and $h\in H$ we write $h\cdot\lambda$ for the cocharacter defined by $(h\cdot\lambda)(t) = h\lambda(t)h^{-1}$ for each $t\in \Gm$.
Given an affine variety $X$ and a morphic action of $H$ on $X$, for each $\lambda\in Y(H)$ and $x\in X$ we can define a morphism $\phi_{x,\lambda}:\Gm\to X$ by
the rule $\phi_\lambda(t) = \lambda(t)\cdot x$.
Identifying $\Gm$ as a principal open set in $\AA^1$ in the usual way, if $\phi_{x,\lambda}$ extends to a (necessarily unique) morphism $\widehat{\phi}_{x,\lambda}$ from all of $\AA^1$ to $X$, then we say that $\lim_{t\to 0}\lambda(t)\cdot x$ \emph{exists} and set $\lim_{t\to 0}\lambda(t)\cdot x= \widehat{\phi}_{x,\lambda}(0)$.

This set-up is important to us in this paper when we consider the action of $G$ on itself by conjugation.
Here, for each $\lambda\in Y(G)$, the set $P_\lambda:=\{g\in G\mid \lim_{t\to 0}\lambda(t)g\lambda(t)^{-1} \textrm{ exists}\}$ is
a so-called \emph{R-parabolic subgroup of $G$} \cite[Sec.~6]{BMR}.
An R-parabolic subgroup of $G$ is a parabolic subgroup of $G$ in the usual sense, and it has a \emph{Levi decomposition}
$P_\lambda = L_\lambda \ltimes R_u(P_\lambda)$, where
\begin{align*}
L_\lambda & :=\{g\in G\mid \lim_{t\to0} \lambda(t)g\lambda(t)^{-1}=g \} = C_G(\Im(\lambda)), \\
R_u(P_\lambda) &= \{g\in G\mid  \lim_{t\to 0}\lambda(t)g\lambda(t)^{-1}=1 \};
\end{align*}
see \cite[Prop.\ 5.2]{martin1} for this description of $R_u(P_\lambda)$.
Since $R_u(P_\lambda)$ is connected, $P_\lambda$ and $L_\lambda$ have the same number of connected components.  We call $L_\lambda$ an \emph{R-Levi subgroup} of $P_\lambda$.
Note that for all $g\in P_\lambda$, we have $\lim_{t\to0}\lambda(t)g\lambda(t)^{-1} \in L_\lambda$ --- in fact, the map $g\mapsto \lim_{t\to0}\lambda(t)g\lambda(t)^{-1}$ is the canonical
projection $P_\lambda \to L_\lambda$ which arises by quotienting out $R_u(P_\lambda)$.
For more properties of these subgroups, see \cite[Sec.~6]{BMR};
we recall here that for connected $G$, the R-parabolic subgroups and their R-Levi subgroups are precisely the parabolic subgroups and their Levi subgroups
\cite[Sec.~8.4]{springer}.  Moreover, $P_\lambda\cap P_{-\lambda}= L_{\lambda}$, so if $G$ is connected then $P_\lambda$ and $P_{-\lambda}$ are opposite parabolic subgroups.  

These results have 
analogues in the Lie algebra $\Lie(G) = \Lie(G^0)$ of $G$.
Recall that $G$ acts on $\Lie(G)$ via the adjoint representation $\Ad$, and then for each $\lambda\in Y(G)$ we have:
\begin{align*}
\Lie(P_\lambda) &= \{X\in \Lie(G)\mid \lim_{t\to0} \Ad(\lambda(t))(X) \textrm{ exists}\}\\
\Lie(L_\lambda) &= \{X\in \Lie(G)\mid \lim_{t\to0} \Ad(\lambda(t))(X) = X\}\\
\Lie(R_u(P_\lambda)) &= \{X\in \Lie(G)\mid \lim_{t\to0} \Ad(\lambda(t))(X) =0\};
\end{align*}
see, e.g., \cite[Sec.~2]{rich1}.

If $H$ is a reductive subgroup of $G$, then we may identify $Y(H)$ with a subset of $Y(G)$.
Then a cocharacter of $H$ gives rise to an R-parabolic subgroup of $H$ and of $G$ --- in this situation,
we write $P_\lambda(H)$ for the R-parabolic subgroup of $H$ and reserve the notation $P_\lambda$ for the R-parabolic subgroup of $G$;
we similarly write $L_\lambda(H)$.
It is clear from the definitions that $P_\lambda(H) = P_\lambda\cap H$, $L_\lambda(H) = L_\lambda\cap H$ and $R_u(P_\lambda(H)) = R_u(P_\lambda)\cap H$.

In what follows, we occasionally need to use the root system of $G^0$, so we introduce some notation here.
Let $T$ be a maximal torus of $G$ and let
$\Phi = \Phi(G^0,T)$ be the set of roots of $G^0$ with respect to $T$. 
Let $B$ be a Borel subgroup of $G$ containing $T$ and let 
$\Phi^+ = \Phi(B,T)$ denote the positive system of roots with respect to $B$. 
For each $\alpha\in \Phi$ we have a root subgroup $X_\alpha$ of $G$.
For a cocharacter $\lambda\in Y(T)$, we have $X_\alpha\subseteq P_\lambda$ 
if and only if $\langle\lambda,\alpha\rangle \geq 0$, where $\langle \, ,\, \rangle:Y(T)\times X(T)\to \ZZ$ is the usual pairing between cocharacters and characters of $T$.
We have $X_\alpha\subseteq L_\lambda$ if and only if $\langle\lambda,\alpha\rangle = 0$,
and also $R_u(P_\lambda)$ is generated by the $X_\alpha$ with $\langle\lambda,\alpha\rangle >0$; cf.\ the proof of \cite[Prop.~8.4.5]{springer}.

We finish this section with a key result \cite[Prop.\ 5.4(a)]{martin1} which we use often in the sequel (note that in \emph{loc.~cit.} R-parabolic subgroups are called ``generalized parabolic subgroups'').

\begin{lem}\label{lem:N_G(P)}
Suppose $P$ is a parabolic subgroup of $G^0$.
Then $N_G(P)$ is an R-parabolic subgroup of $G$ with $N_G(P)^0 = P$.
\end{lem}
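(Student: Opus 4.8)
The plan is to exhibit $N_G(P)$ explicitly as an R-parabolic subgroup $P_\lambda$ for a carefully chosen cocharacter $\lambda$ of $G$. To set up, fix a maximal torus $T$ of $G$ contained in $P$ (it is then a maximal torus of $G^0$), let $L$ be the Levi subgroup of $P$ containing $T$, and write $\Phi = \Phi(G^0,T)$, $\Phi_L$ for the roots of $L$, and $\Psi$ for the roots of $R_u(P)$. Since $G^0$ is connected, $P = P_{\lambda_0}(G^0)$ for some $\lambda_0 \in Y(T)$; then $L_{\lambda_0}(G^0) = L$ and $R_u(P_{\lambda_0}) = R_u(P)$, equivalently $\langle \lambda_0, \alpha\rangle = 0$ for $\alpha \in \Phi_L$ and $\langle\lambda_0,\alpha\rangle > 0$ for $\alpha \in \Psi$.

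First I would dispatch the two formal points. Parabolic subgroups of the connected reductive group $G^0$ are self-normalizing, so $N_G(P)\cap G^0 = N_{G^0}(P) = P$; as $P$ is connected while $N_G(P)^0 \subseteq G^0$, this gives $N_G(P)^0 = P$, which is the second assertion of the lemma. The identical argument, together with the identity $P_\mu(G^0) = P_\mu\cap G^0$, shows that any $\mu$ with $P_\mu(G^0) = P$ has $P_\mu^0 = P$, and hence $P_\mu \subseteq N_G(P)$, because $P_\mu$ normalizes its own identity component. So the whole problem reduces to finding \emph{one} cocharacter $\lambda$ with $P_\lambda(G^0) = P$ and $N_G(P) \subseteq P_\lambda$.

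Next I would whittle $N_G(P)$ down. For $g \in N_G(P)$, the torus $gTg^{-1}$ is a maximal torus of $gPg^{-1} = P$, hence $P$-conjugate to $T$, say $gTg^{-1} = pTp^{-1}$ with $p\in P$; then $p^{-1}g$ lies in $M := N_G(P)\cap N_G(T)$ and $N_G(P) = PM = R_u(P)\,L\,M$. Since $M$ normalizes $T$, it acts on $Y(T)$; the action is trivial on $T\subseteq M$, and the resulting group $W_M$ of automorphisms of $Y(T)$ is \emph{finite}, since $M/T$ is finite ($M\cap G^0 = N_P(T)$ contains $T$ with finite index, and $M/(M\cap G^0)$ embeds in the finite group $G/G^0$). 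As $M$ also normalizes $L$ and $R_u(P)$, the finite group $W_M$ permutes $\Phi_L$ and permutes $\Psi$, so it preserves the convex cone $\mathcal{C} = \{\mu\in Y(T)\otimes\mathbb{R} : \langle\mu,\alpha\rangle = 0\ \forall\alpha\in\Phi_L,\ \langle\mu,\alpha\rangle > 0\ \forall\alpha\in\Psi\}$, which contains $\lambda_0$. Taking $\lambda := \sum_{w\in W_M} w\cdot\lambda_0$ then yields $\lambda\in\mathcal{C}\cap Y(T)$ with $w\cdot\lambda = \lambda$ for all $w\in W_M$, i.e. $m\lambda(t)m^{-1} = \lambda(t)$ for every $m\in M$, so that $M\subseteq C_G(\Im\lambda) = L_\lambda$.

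To finish, observe that since $\lambda\in\mathcal{C}$ the root-group descriptions give $P_\lambda(G^0) = P$, $R_u(P_\lambda) = R_u(P)$ and $L_\lambda(G^0) = L$; thus $L$ and $M$ both lie in the subgroup $L_\lambda$, whence $N_G(P) = R_u(P)\,L\,M \subseteq R_u(P_\lambda)\,L_\lambda = P_\lambda$. Combined with $P_\lambda\subseteq N_G(P)$ from the second step, this gives $N_G(P) = P_\lambda$, an R-parabolic subgroup with $N_G(P)^0 = P_\lambda^0 = P$. The one step with real content --- and the only place the possible non-connectedness of $G$ matters --- is the averaging: a careless choice of $\mu$ with $P_\mu(G^0) = P$ need not satisfy $N_G(P)\subseteq P_\mu$ (this already fails, e.g., for $G = \SL_3$ extended by a graph automorphism), and replacing $\lambda_0$ by a $W_M$-fixed vector of $\mathcal{C}$ is exactly what enlarges $\Im\lambda$, hence $L_\lambda$, enough to swallow all of $N_G(P)$.
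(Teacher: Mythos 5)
Your proof is correct. Note, though, that the paper itself does not prove this lemma: it is imported verbatim from \cite[Prop.\ 5.4(a)]{martin1}, so there is no in-paper argument to compare against. Your argument is a clean, self-contained version of the standard one, and the key step is exactly the right one: the reduction $N_G(P)=R_u(P)\,L\,M$ with $M=N_G(P)\cap N_G(T)$, followed by averaging a defining cocharacter $\lambda_0$ of $P$ over the finite group $W_M$ acting on $Y(T)$ so as to produce a $\lambda$ that is still in the open cone cutting out $P$ (hence $P_\lambda\cap G^0=P$, $R_u(P_\lambda)=R_u(P)$) but is now centralized by all of $M$, forcing $M\subseteq L_\lambda$ and hence $N_G(P)\subseteq P_\lambda$. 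All the supporting facts you invoke --- self-normalization of parabolics in $G^0$, $P_\mu(G^0)=P_\mu\cap G^0$, the root-group descriptions of $P_\lambda$, $L_\lambda$, $R_u(P_\lambda)$, conjugacy of maximal tori of $P$, uniqueness of the Levi of $P$ containing $T$, and finiteness of $M/T$ --- are available in the paper's preliminaries or are standard. Your closing remark that the averaging is where non-connectedness genuinely enters, with the $\SL_3$-plus-graph-automorphism example, is also accurate: for a non-$W_M$-fixed regular dominant $\mu$ one indeed gets $P_\mu\subsetneq N_G(B)$.
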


\subsection{$G$-complete reducibility and optimal R-parabolic subgroups}
\label{sec:Gcr}

We collect some basic results concerning Serre's notion of complete reducibility;
for further background and results, see \cite{serre1.5}, \cite{serre2}, \cite{BMR}.
A subgroup $H$ of $G$ is called \emph{$G$-completely reducible} ($G$-cr) if whenever 
$H\subseteq P$ for an R-parabolic subgroup $P$, there exists an R-Levi subgroup $L$ of $P$ with $H\subseteq L$.
If $H$ is a subgroup of $G^0$, then $H$ is $G$-cr if and only if $H$ is $G^0$-cr \cite[Prop.~2.12]{BMR:commuting}.  Note that if $G^0$ is a torus then $R_u(P_\lambda)= 1$ for any $\lambda\in Y(G)$, so every subgroup of $G$ is $G$-cr.

A subgroup $H$ of $G$ is \emph{$G$-irreducible} ($G$-ir) if $H$ is not contained in any proper R-parabolic subgroup of $G$; 
a $G$-ir subgroup is automatically $G$-cr.
We note that if $H$ meets every component of $G$, then $H$ is $G$-ir if and only if $H$ normalizes no proper parabolic subgroup of $G^0$ --- this follows from Lemma~\ref{lem:N_G(P)}.
Only the forward implication holds if $H$ does not meet every component of $G$:
whenever $Z(G^0)$ is not central in $G$, there are cocharacters $\lambda\in Y(Z(G^0))$ such that
$P_\lambda = L_\lambda$ is a proper subgroup of $G$.
These subgroups are $G$-cr but not $G$-ir, and yet have identity component equal to $G^0$, so do not normalize any proper parabolic subgroup of $G^0$.

Our next result is an easy fact about $G$-complete reducibility which we use in the proof of part (i) of Theorem~\ref{thm:main}.

\begin{lem}\label{lem:derivedcr}
	Suppose $K$ is a connected reductive subgroup of $G$.
	Then $K$ is $G$-completely reducible if and only if $[K,K]$ is $G$-completely reducible.
\end{lem}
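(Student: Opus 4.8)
The plan is first to reduce to the case that $G$ is connected. Since $K$ is connected we have $K\subseteq G^0$, and likewise $[K,K]\subseteq G^0$; by \cite[Prop.~2.12]{BMR:commuting}, $K$ is $G$-completely reducible if and only if it is $G^0$-completely reducible, and the same holds for $[K,K]$. So we may replace $G$ by $G^0$ and assume $G$ is connected, in which case R-parabolic and R-Levi subgroups are just the usual parabolic and Levi subgroups. With this in place the forward implication is immediate: $[K,K]$ is a closed normal subgroup of $K$, so if $K$ is $G$-cr then so is $[K,K]$ by \cite[Thm.~3.10]{BMR}.

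For the converse, suppose $[K,K]$ is $G$-cr. Write $S:=Z(K)^0$ for the central torus of $K$, so that $K=S\cdot[K,K]$, and set $M:=C_G(S)$. Since $G$ is connected and $S$ is a torus, $M$ is a Levi subgroup of $G$. Because $S$ is central in $K$ we have $K\subseteq M$, and because every element of $M=C_G(S)$ centralises $S$ we have $S\subseteq Z(M)$, hence $S\subseteq Z(M)^0$. I would now invoke the standard fact that a subgroup contained in a Levi subgroup $M$ of $G$ is $G$-cr if and only if it is $M$-cr (see \cite{serre2}, \cite{BMR}); applied to $[K,K]$, this gives that $[K,K]$ is $M$-cr. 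It then suffices to show that $K$ itself is $M$-cr, as the same equivalence will then yield that $K$ is $G$-cr. So suppose $K\subseteq P$ for a parabolic subgroup $P$ of $M$. Then $[K,K]\subseteq P$, and $M$-complete reducibility of $[K,K]$ provides a Levi subgroup $L$ of $P$ with $[K,K]\subseteq L$. Every Levi subgroup of a parabolic subgroup of $M$ contains the central torus $Z(M)^0$, and hence contains $S$; therefore $K=S\cdot[K,K]\subseteq L$, so $K$ is $M$-cr, as required.

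I do not expect a serious obstacle. The only non-formal ingredients are the behaviour of $G$-complete reducibility with respect to Levi subgroups and the classical fact that the centraliser of a torus in a connected reductive group is a Levi subgroup; everything else is the elementary observation that the central torus $S$ of $M$ lies inside every Levi subgroup that appears. One could equally avoid the reduction to $G^0$ and run the same argument with R-Levi subgroups throughout, using the versions of these results for possibly non-connected groups from \cite{BMR}.
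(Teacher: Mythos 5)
Your argument is correct and is essentially the paper's own proof: both decompose $K$ as $Z(K)^0\cdot[K,K]$, pass to the centraliser $C_G(Z(K)^0)$ via the standard equivalence between $G$-complete reducibility and complete reducibility in the centraliser of a torus (\cite[Cor.~3.22, Sec.~6.3]{BMR}), and then observe that the central torus lies in every relevant parabolic and Levi subgroup, so that $K$ and $[K,K]$ lie in the same ones. Your preliminary reduction to connected $G$ and your use of normality \cite[Thm.~3.10]{BMR} for the forward implication are harmless variations; the paper handles both directions symmetrically with the same central-torus observation and works with R-parabolic and R-Levi subgroups directly.
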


\begin{proof}
	We may write $K = [K,K]Z$, with $Z= Z(K)^0$. Let $L=C_G(Z)$.
	Since $Z$ is a torus centralizing $K$ and $[K,K]$, we have that $K$ (resp.~$[K,K]$) is $G$-cr if and only if $K$ (resp.~$[K,K]$) 
	is $L$-cr, by \cite[Cor.~3.22, Sec.~6.3]{BMR}. 
	But $Z$ is contained in every R-parabolic subgroup and every  R-Levi subgroup of $L$, because $Z$ is a central torus in $L$.
	So $K$ is $L$-cr if and only if $[K,K]$ is $L$-cr.
\end{proof}

The next result follows quickly from \cite[Prop.\ 4.11]{boreltits}. 
We give the details since they are useful in what follows.

\begin{lem}
	\label{lem:opp_rads}
	Let $P$ and $Q$ be opposite parabolic subgroups of $G^0$.  
	Let $M$ be the subgroup of $G$ generated by $R_u(P)\cup R_u(Q)$. 
	Then $M$ is connected and $G$-completely reducible.  
	Moreover, if $P$ and $Q$ do not contain any simple factors of $G^0$ then $M = [G^0,G^0]$.
\end{lem}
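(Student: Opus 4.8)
The plan is to pick $\lambda\in Y(G^0)$ with $P=P_\lambda$ and $Q=P_{-\lambda}$, so that $L:=L_\lambda=P\cap Q$ is a common R-Levi subgroup and the root subgroups generating $R_u(P)$ (resp.\ $R_u(Q)$) are precisely the $X_\alpha$ with $\langle\lambda,\alpha\rangle>0$ (resp.\ $<0$). Since $R_u(P)$ and $R_u(Q)$ are connected, so is the subgroup $M$ they generate. For normality: the big cell gives $G^0=\langle R_u(P),L,R_u(Q)\rangle$, and since $R_u(P),R_u(Q)\subseteq M$ while $L$ normalizes each of $R_u(P)$ and $R_u(Q)$, it follows that $G^0$ normalizes $M$, i.e.\ $M\trianglelefteq G^0$.

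The next step is to identify $M$ exactly. As $R_u(M)$ is characteristic in $M$ it is normal in $G^0$, so $R_u(M)\subseteq R_u(G^0)=1$ and $M$ is reductive; being generated by unipotent elements, $M$ projects trivially to the torus $M/[M,M]$, so $M=[M,M]$ is semisimple. For each simple factor $G_i$ of $G^0$ the group $M\cap G_i$ is normal in $G_i$, so either $G_i\subseteq M$ or $[M,G_i]=1$; comparing root subgroups shows the first alternative holds exactly when $\lambda$ fails to centralize $G_i$, i.e.\ exactly when $G_i\not\subseteq L$, which (since $G_i\subseteq L\iff G_i\subseteq P\iff G_i\subseteq Q$) is the condition that $G_i$ lie in neither $P$ nor $Q$. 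Hence $M=\prod_{G_i\not\subseteq L}G_i$; in particular, if $P$ and $Q$ contain no simple factor of $G^0$ then $M=G_1\cdots G_r=[G^0,G^0]$, which is the ``moreover'' clause.

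Finally, $M$ is $G$-completely reducible: the group $G^0$ is $G$-cr, since if $G^0\subseteq P_\mu$ then $P_\mu\cap G^0=G^0$ is reductive, so $\mu$ centralizes $G^0$ and hence $G^0\subseteq L_\mu$; and a normal subgroup of a $G$-cr subgroup is $G$-cr by \cite[Thm.~3.10]{BMR}, so $M\trianglelefteq G^0$ is $G$-cr. I expect the identification of $M$ with the subproduct $\prod_{G_i\not\subseteq L}G_i$ --- the bookkeeping with root subgroups across the simple factors --- to be the only step needing genuine care; the normality of $M$ in $G^0$ is essentially the content of \cite[Prop.~4.11]{boreltits}, and the $G$-complete reducibility is then formal.
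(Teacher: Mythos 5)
Your proposal is correct and follows essentially the same route as the paper: connectedness because $M$ is generated by connected subgroups, normality of $M$ in $G^0$ from the fact that the common Levi $L$ normalizes both unipotent radicals (the paper phrases this as $N_{G^0}(M)\supseteq R_u(P)\cup R_u(Q)\cup L$ and hence $N_{G^0}(M)=G^0$, which is the same content as your big-cell argument), then $G$-complete reducibility via normality and \cite[Thm.~3.10]{BMR}, and the ``moreover'' clause by reducing to the simple factors and using that a positive-dimensional connected normal subgroup of a simple group is the whole group. Your exact identification $M=\prod_{G_i\not\subseteq L}G_i$ is a slight refinement of what the paper records, but the underlying mechanism is identical.
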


\begin{proof}
	Since $M$ is generated by the connected groups $R_u(P)$ and $R_u(Q)$, $M$ is connected by \cite[Prop.\ 2.2]{borel}.
	Now we use the proof of \cite[Prop.\ 4.11]{boreltits}: the opposite parabolic subgroups $P$ and $Q$ have a common Levi subgroup $L$ which normalizes $R_u(P)$ and $R_u(Q)$.
	Hence $N_{G^0}(M)$ contains $R_u(P)$, $R_u(Q)$ and $L$, which puts a maximal torus $T\subseteq L$ and all the root subgroups of $G^0$ inside $N_{G^0}(M)$.
	Thus $N_{G^0}(M) = G^0$, and we see that
	$M$ is normal in $G^0$ (which is the result of \emph{loc. cit.}). 
	Therefore $M$ is $G^0$-cr, by \cite[Thm.\ 3.10]{BMR}, and hence $M$ is $G$-cr.
	
	For the final assertion, let $G_1,\ldots, G_r$ be the simple factors of $G^0$.
	For $1\leq i \leq r$, let $P_i$ and $Q_i$ denote the opposite parabolic subgroups of $G_i$ corresponding to $P$ and $Q$, and let $M_i$ denote the subgroup of $G_i$ generated by $R_u(P_i)$ and $R_u(Q_i)$;
	by the first paragraph, $M_i$ is normal in $G_i$.  
	The hypothesis that $G_i$ is not contained in $P_i$ and $Q_i$ implies that $M_i$ is a positive-dimensional connected normal subgroup of $G_i$, and hence $M_i = G_i\subseteq M$.
	Thus the final part of the statement also holds.   
\end{proof}

If $H$ is a subgroup of $G$ which is not $G$-cr, then there is a way to associate to $H$ a so-called 
\emph{optimal R-parabolic subgroup} $P$ of $G$: see \cite[Sec.~4]{GIT}.

\begin{thm}\label{thm:optpar}
Suppose that the subgroup $H$ of $G$ is not $G$-completely reducible. 
Then there exists an R-parabolic subgroup $P$ of $G$ with the following properties:
\begin{itemize}
\item[(i)] $H$ is not contained in any R-Levi subgroup of $P$;
\item[(ii)] $N_G(H)\subseteq P$.
\end{itemize}
\end{thm}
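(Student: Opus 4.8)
The plan is to follow the optimal-destabilizing-cocharacter machinery of \cite[Sec.~4]{GIT}. First I would pass from the subgroup $H$ to a point of an affine variety: choose finitely many elements $h_1,\dots,h_n$ that generate $H$ as an algebraic group (such a tuple exists because every linear algebraic group contains a finitely generated dense subgroup) and set $\mathbf{h}=(h_1,\dots,h_n)\in G^n$, with $G$ acting on $G^n$ by simultaneous conjugation. By the closed-orbit characterization of $G$-complete reducibility (\cite[Thm.~3.1]{BMR}), the hypothesis that $H$ is not $G$-cr is precisely the statement that the orbit $G\cdot\mathbf{h}$ is not closed in $G^n$. For later use, note that for $\lambda\in Y(G)$ the limit $\lim_{t\to0}\lambda(t)\cdot\mathbf{h}$ exists if and only if each $h_i\in P_\lambda$, which (as $P_\lambda$ is closed) is equivalent to $H\subseteq P_\lambda$.

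Now I would invoke the Kempf--Rousseau--Hesselink theory, as developed for this setting in \cite{GIT}: fixing a $G$-invariant norm on $Y(G)$, the non-closedness of $G\cdot\mathbf{h}$ produces a canonical nonempty \emph{optimal} class $\Lambda=\Lambda(\mathbf{h})\subseteq Y(G)$ of destabilizing cocharacters. Fix $\lambda\in\Lambda$ and put $\mathbf{h}':=\lim_{t\to0}\lambda(t)\cdot\mathbf{h}$ and $P:=P_\lambda$. The theory guarantees that $P$ does not depend on the choice of $\lambda\in\Lambda$, that $\Lambda$ is a single $R_u(P)$-orbit under conjugation, that $\mathrm{Stab}_G(\mathbf{h})\subseteq P$, and that $\lambda$ is a genuine destabilizer, i.e., $\mathbf{h}'\notin G\cdot\mathbf{h}$; also $H\subseteq P$. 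This $P$ is my candidate. For (i), suppose $H$ were contained in some R-Levi subgroup of $P$. All R-Levi subgroups of $P$ are $R_u(P)$-conjugate to $L_\lambda$ (\cite[Sec.~6]{BMR}), so $c^{-1}Hc\subseteq L_\lambda$ for some $c\in R_u(P)$. Since $L_\lambda=C_G(\Im(\lambda))$, the tuple $c^{-1}\cdot\mathbf{h}$ is fixed by $\lambda$, so $\lim_{t\to0}\lambda(t)\cdot(c^{-1}\cdot\mathbf{h})=c^{-1}\cdot\mathbf{h}$; on the other hand, writing $\lambda(t)\cdot(c^{-1}\cdot\mathbf{h})=\bigl(\lambda(t)c^{-1}\lambda(t)^{-1}\bigr)\cdot\bigl(\lambda(t)\cdot\mathbf{h}\bigr)$ and using that $c^{-1}\in R_u(P_\lambda)$ forces the first factor to tend to $1$, the same limit equals $\lim_{t\to0}\lambda(t)\cdot\mathbf{h}=\mathbf{h}'$. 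Hence $\mathbf{h}'=c^{-1}\cdot\mathbf{h}\in G\cdot\mathbf{h}$, contradicting that $\lambda$ is a genuine destabilizer. So $H$ lies in no R-Levi subgroup of $P$, which is (i).

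For (ii) the key input is the fact --- established in \cite[Sec.~4]{GIT} --- that the optimal class $\Lambda(\mathbf{h})$ depends only on the subgroup $H$ generated by the tuple, and not on the tuple itself. Granting this, let $g\in N_G(H)$. Then $g\cdot\mathbf{h}=(gh_1g^{-1},\dots,gh_ng^{-1})$ generates $gHg^{-1}=H$, so $\Lambda(g\cdot\mathbf{h})=\Lambda(\mathbf{h})=\Lambda$ by tuple-independence, while $\Lambda(g\cdot\mathbf{h})=g\cdot\Lambda$ by the $G$-equivariance of the construction; hence $g\cdot\Lambda=\Lambda$. Picking $\lambda\in\Lambda$, there is $u\in R_u(P)$ with $g\cdot\lambda=u\cdot\lambda$, whence $u^{-1}g\in C_G(\Im(\lambda))=L_\lambda\subseteq P$ and so $g\in uP=P$. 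Thus $N_G(H)\subseteq P$, which is (ii) --- and this re-proves $H\subseteq P$, since $H\subseteq N_G(H)$.

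The substantive obstacle is precisely the tuple-independence used in the previous paragraph. A priori $\Lambda$ is attached to a point of $G^n$, and one must show that two tuples $\mathbf{h},\mathbf{h}'$ each generating $H$ give the same optimal class. The natural reduction is to the case where $\mathbf{h}'$ is obtained from $\mathbf{h}$ by appending extra coordinates lying in the abstract subgroup already generated --- which one arranges by comparing each of $\mathbf{h}$, $\mathbf{h}'$ with their concatenation, itself a generating tuple of $H$ --- and then to check that such an enlargement changes neither the optimal value of the normalized weight function nor the optimal class; this rests on the convexity and uniqueness statements in the Kempf--Rousseau--Hesselink theorem, together with the observation that a cocharacter destabilizes $\mathbf{h}$ exactly when it destabilizes any tuple generating the same subgroup. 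One must also verify that the whole instability machinery, and the structure theory of R-parabolic and R-Levi subgroups it uses, go through for possibly non-connected $G$; these points are handled in \cite{BMR, GIT}.
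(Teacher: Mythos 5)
The paper gives no self-contained proof of this theorem: it quotes it from the optimality machinery of \cite[Sec.~4]{GIT} (built on \cite{BMR} and \cite{kempf}), and your proposal is a reconstruction of exactly that argument --- pass to a finite tuple, use the closed-orbit criterion for $G$-complete reducibility, take the Kempf--Rousseau--Hesselink optimal class $\Lambda$, and get (i) from the fact that the optimal limit leaves the orbit and (ii) from $G$-equivariance of $\Lambda$ together with its independence of the chosen tuple. The logic of your deductions of (i) and (ii) from these inputs is correct, and you are right that tuple-independence is the substantive technical point; it is precisely what \cite{GIT} establishes, so deferring it there is legitimate (the paper defers the entire theorem there).

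There is, however, a genuine error at your very first step. The assertion that ``every linear algebraic group contains a finitely generated dense subgroup'' is false in positive characteristic: if $k=\overline{\mathbb{F}_p}$, any finitely generated subgroup of $H\subseteq \GL_n(k)$ has all matrix entries of its generators in some finite field $\mathbb{F}_q$, hence lies in the finite group $\GL_n(\mathbb{F}_q)$; being finite it is Zariski-closed, so it cannot be dense once $\dim H>0$. Since the paper works over an arbitrary algebraically closed field of characteristic $p\geq 0$ and $H$ here is an arbitrary (not necessarily reductive) subgroup, this case cannot be excluded. The standard repair --- and the one actually used in \cite{BMR} and \cite{GIT} --- is to replace topologically generating tuples by \emph{generic tuples}: embed $G$ in some $\GL(V)$ and choose $h_1,\dots,h_n\in H$ spanning the associative subalgebra of $\mathrm{End}(V)$ generated by $H$; such a tuple exists by finite-dimensionality of $\mathrm{End}(V)$, and it has exactly the property your argument uses in both directions, namely that an R-parabolic (respectively R-Levi) subgroup of $G$ contains $H$ if and only if it contains every $h_i$. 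With ``generating tuple'' replaced by ``generic tuple'' throughout (including in the statement of tuple-independence, which must then be independence of the choice of generic tuple), your argument goes through and coincides with the cited proof.
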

The construction of $P$ relies on the geometric characterisation of complete reducibility introduced in \cite{BMR} and developed further in \cite{GIT} --- roughly speaking, one associates to $H$ an orbit in an affine $G$-variety, and 
then the R-parabolic subgroup arises from the \emph{optimal class} of cocharacters for that orbit; see also \cite{kempf}.

We finish the section with a result we need for the proof of Theorem~\ref{thm:main}.

\begin{lem}
\label{lem:torimage}
 Let $\pi\colon G\to G'$ be a homomorphism of connected reductive groups.  Let $\lambda\in Y(G)$ such that $P_\lambda$ is a Borel subgroup of $G$.  Suppose $\pi(G)$ is not a torus.  Then $\pi\circ \lambda$ is nontrivial.  In particular, if $G'$ is simple then $P_{\pi\circ \lambda}\subsetneq G'$.
\end{lem}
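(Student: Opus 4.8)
The plan is to argue by contradiction. Suppose $\pi\circ\lambda$ is trivial, so $\pi(\lambda(t)) = 1$ for every $t\in\Gm$; I will show this forces $\pi(G)$ to be a torus. The crucial point is that when $\pi\circ\lambda$ is trivial we have $\pi(\lambda(t)g\lambda(t)^{-1}) = \pi(g)$ for all $g\in G$ and $t\in\Gm$. Hence, for $g\in P_\lambda$, the morphism $\AA^1\to G'$ obtained by composing $\pi$ with the extension $\widehat{\phi}_{g,\lambda}\colon\AA^1\to G$ of $t\mapsto\lambda(t)g\lambda(t)^{-1}$ agrees on $\Gm$ with the constant morphism of value $\pi(g)$, so by uniqueness it equals it everywhere; evaluating at $t=0$ gives $\pi\big(\lim_{t\to 0}\lambda(t)g\lambda(t)^{-1}\big) = \pi(g)$. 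Taking $g\in R_u(P_\lambda)$, where this limit is $1$, yields $\pi(g) = 1$; thus $R_u(P_\lambda)\subseteq\ker\pi$. Applying the same reasoning to $-\lambda$ (whose composite with $\pi$ is again trivial) gives $R_u(P_{-\lambda})\subseteq\ker\pi$.

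Now $B := P_\lambda$ is a Borel subgroup of $G$ by hypothesis, and since $G$ is connected $P_{-\lambda}$ is the Borel subgroup opposite to $B$. Being solvable, neither $P_\lambda$ nor $P_{-\lambda}$ contains a simple factor $G_i$ of $G$, since $G_i$ is not solvable. So Lemma~\ref{lem:opp_rads}, applied to the opposite parabolic subgroups $P_\lambda$ and $P_{-\lambda}$, gives
\[
[G,G] = \big\langle R_u(P_\lambda)\cup R_u(P_{-\lambda})\big\rangle \subseteq \ker\pi .
\]
Therefore $\pi$ factors through $G/[G,G]$, which is a torus, so $\pi(G)$ is a quotient of a torus and hence a torus --- contradicting the hypothesis. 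This proves $\pi\circ\lambda$ is nontrivial.

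For the final assertion, assume $G'$ is simple and set $\mu := \pi\circ\lambda$, which we have just shown is nontrivial. If $P_\mu = G'$, then $L_\mu = P_\mu\cap P_{-\mu} = G'$, so $C_{G'}(\Im\mu) = G'$, i.e.\ $\Im\mu\subseteq Z(G')$; but $Z(G')$ is finite while $\Im\mu$ is connected, forcing $\Im\mu = 1$, contrary to $\mu$ being nontrivial. Hence $P_{\pi\circ\lambda}$ is a proper subgroup of $G'$.

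The only step needing any care is the first one, namely checking that $\pi$ intertwines the limit operation $\lim_{t\to 0}$; once that is in hand the argument is a short assembly of standard structural facts about connected reductive groups together with Lemma~\ref{lem:opp_rads}, and I expect no genuine obstacle. Morally, the lemma just repackages the fact that $R_u(P_\lambda)$ and $R_u(P_{-\lambda})$ generate $[G,G]$ when $P_\lambda$ is a Borel subgroup, so a homomorphism killing $\lambda$ must kill all of $[G,G]$.
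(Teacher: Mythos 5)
Your proof is correct, but it follows a genuinely different route from the paper's. The paper reduces to the case $G = G_1\times\cdots\times G_r\times Z(G)^0$ via the multiplication isogeny (using \cite[Prop.~2.11]{BMR} to transport the Borel hypothesis), writes $\lambda$ as a product of cocharacters $\lambda_i$ of the simple factors, observes that each $\lambda_i$ is nontrivial because $P_\lambda$ is a Borel subgroup, and concludes that if $\pi\circ\lambda$ were trivial then $\ker(\pi)$ --- whose identity component is a product of certain $G_i$ with a subtorus of $Z(G)^0$ --- would have to contain all the $G_i$, forcing $\pi(G)$ to be a torus. You instead stay inside $G$ itself: triviality of $\pi\circ\lambda$ makes $\pi$ intertwine the limit $\lim_{t\to 0}\lambda(t)(-)\lambda(t)^{-1}$ with the identity (your separatedness/density argument for this is sound; it is also the content of the standard fact $\pi(R_u(P_\lambda))\subseteq R_u(P_{\pi\circ\lambda})$), so $\pi$ kills $R_u(P_\lambda)$ and $R_u(P_{-\lambda})$, and Lemma~\ref{lem:opp_rads} then puts $[G,G]$ inside $\ker\pi$. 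Your approach has the merit of reusing Lemma~\ref{lem:opp_rads}, which the paper has already established and exploits elsewhere, and it avoids the isogeny reduction and the structure theory of normal connected subgroups of a product; the paper's argument is a more direct bookkeeping of where $\Im(\lambda)$ can sit inside $\ker(\pi)$. One small point in your last paragraph: the identity $L_\mu = P_\mu\cap P_{-\mu} = G'$ tacitly assumes $P_{-\mu}=G'$ as well; it is cleaner to say that $P_\mu = G'$ forces $R_u(P_\mu)\subseteq R_u(G')=1$, whence $G'=L_\mu=C_{G'}(\Im\mu)$ and $\Im\mu\subseteq Z(G')$ is finite yet connected, so trivial. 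This is a cosmetic repair, not a gap.
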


\begin{proof}
 Let $G_1,\ldots, G_r$ be the simple factors of $G$ and let $Z= Z(G)^0$.  Let $\mu\colon G_1\times\cdots\times G_r\times Z\to G$ be the multiplication map.  Since $\mu$ is an isogeny, there exist $n\in {\mathbb N}$ and $\nu\in Y(G_1\times\cdots\times G_r\times Z)$ such that $\mu\circ \nu= n\lambda$.  By \cite[Prop.~2.11]{BMR}, $P_\nu= \mu^{-1}(P_{n\lambda})= \mu^{-1}(P_\lambda)$, so $P_\lambda$ is a Borel subgroup of $G$ if and only if $P_\nu$ is a Borel subgroup of $G_1\times\cdots\times G_r\times Z$.  Without loss, therefore, we can assume that $G= G_1\times\cdots\times G_r\times Z$ and $\nu= \lambda$.
 
 Suppose $\pi\circ \lambda$ is trivial.  We can write $\lambda= \lambda_1 \times \cdots \times  \lambda_r \times \epsilon$, 
 where each $\lambda_i$ belongs to $Y(G_i)$ and $\epsilon$ belongs to $Y(Z)$.  Now ${\rm ker}(\pi)^0$ is the product of certain of the $G_i$ with a subtorus of $Z$.  Each $\lambda_i$ is nontrivial since $P_\lambda$ is a Borel subgroup, so ${\rm ker}(\pi)$ must contain $G_1\times\cdots\times G_r$.  The result follows.
\end{proof}

\section{The connected case}
\label{sec:conn}

Recall that if $G$ is connected then $g\in G$ is {\em regular} if $\dim(C_G(g))$ is minimal.
We need two properties of regular unipotent and nilpotent elements for connected reductive groups.

\begin{lem}\label{lem:conreg}
 Assume $G$ is connected, and let $u\in G$ be unipotent. 
Then:
\begin{itemize}
\item[(i)] $u$ is regular if and only if $u$ is contained in a unique Borel subgroup $B$ of $G$; 
\item[(ii)] if $u$ is regular and $P$ is a parabolic subgroup of $G$ with $u\in R_u(P)$, then $P=B$.
\end{itemize}
Similarly, any regular nilpotent element $e\in \Lie(G)$ is contained in a unique Borel subalgebra $\Lie(B)$, 
and if $P$ is a parabolic subgroup such that $e\in \Lie(R_u(P))$, then $P=B$.
\end{lem}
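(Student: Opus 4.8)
The statement collects two standard facts about regular unipotent (and nilpotent) elements in a connected reductive group $G$; the plan is to reduce both to the theorem of Steinberg quoted in the introduction, namely that a regular unipotent element lies in a unique Borel subgroup. First I would dispose of part (i): the forward implication that a regular unipotent $u$ lies in a unique Borel is exactly \cite[Sec.~3.7, Thm.~1]{steinberg:conjugacy}. For the converse, suppose $u$ is unipotent and lies in a unique Borel subgroup $B$. Every unipotent element of $G$ lies in some Borel subgroup, and the number of Borel subgroups containing a given unipotent element $u$ is related to $\dim C_G(u)$: concretely, one can argue via the variety $\mathcal B_u = \{B' \in \mathcal B : u \in B'\}$ of Borel subgroups containing $u$, whose dimension is $(\dim C_G(u) - \mathrm{rk}(G))/2$ (Springer's formula; or one can use the more elementary observation that $u$ regular implies $C_G(u)^0$ is abelian of dimension $\mathrm{rk}(G)$ and reverse the inequality). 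The cleanest route is: $u$ is contained in a unique Borel $\iff \mathcal B_u$ is a single point $\iff \dim \mathcal B_u = 0 \iff \dim C_G(u) = \mathrm{rk}(G) \iff \dim C_G(u)$ is minimal $\iff u$ is regular. I would cite the dimension formula for $\mathcal B_u$ rather than reprove it.

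For part (ii), suppose $u$ is regular and $P$ is a parabolic with $u \in R_u(P)$. Any parabolic $P$ contains some Borel subgroup $B'$, and $R_u(P) \subseteq B'$, so $u \in B'$; by uniqueness from part (i), $B' = B$, so $B \subseteq P$. Now I must show $P = B$, i.e. $P$ cannot be strictly larger. If $P \supsetneq B$ then $R_u(P) \subsetneq R_u(B) = R_u(B^{-})^{\mathrm{op}}$... more precisely, write $P = P_\lambda$ for a cocharacter $\lambda$ with $B = P_\mu$ for a dominant regular $\mu$; since $B \subseteq P_\lambda$, $\lambda$ is dominant, and $R_u(P_\lambda)$ is generated by root subgroups $X_\alpha$ with $\langle \lambda, \alpha\rangle > 0$. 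Write $u$ in terms of its root-subgroup coordinates in $B = T R_u(B)$: $u = \prod_{\alpha \in \Phi^+} x_\alpha(c_\alpha)$. Regularity of $u$ forces $c_\alpha \neq 0$ for every \emph{simple} root $\alpha$ (this is the standard characterization of regular unipotents in $R_u(B)$: an element of $R_u(B)$ is regular iff all its simple-root coordinates are nonzero). But if $P_\lambda \supsetneq B$ then some simple root $\alpha_0$ has $\langle \lambda, \alpha_0 \rangle = 0$, so $X_{\alpha_0} \not\subseteq R_u(P_\lambda)$; since $u \in R_u(P_\lambda)$ and the simple-root coordinate of $u$ along $\alpha_0$ must vanish for $u$ to lie in $R_u(P_\lambda)$, this contradicts $c_{\alpha_0} \neq 0$. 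Hence $\langle \lambda, \alpha \rangle > 0$ for all simple $\alpha$, so $P_\lambda = B$.

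The Lie algebra statement runs in exact parallel: a regular nilpotent $e \in \Lie(G)$ lies in a unique Borel subalgebra (this is the Lie-algebra analogue of Steinberg's theorem, due to Kostant in characteristic $0$ and valid in good characteristic; in general one can use that $\dim \mathfrak g_e = \mathrm{rk}(G)$ for $e$ regular and the corresponding dimension count on $\mathcal B_e$), and if $e \in \Lie(R_u(P_\lambda))$ then writing $e = \sum_{\alpha \in \Phi^+} d_\alpha e_\alpha + (\text{toral part} = 0)$, regularity forces $d_\alpha \neq 0$ for all simple $\alpha$, while $e \in \Lie(R_u(P_\lambda)) = \bigoplus_{\langle\lambda,\alpha\rangle > 0} \mathfrak g_\alpha$ forces $\langle \lambda, \alpha\rangle > 0$ for every simple $\alpha$, giving $P_\lambda = B$.

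The main obstacle I anticipate is pinning down the converse in part (i) at the right level of generality: the relation between "$u$ in a unique Borel" and "$\dim C_G(u)$ minimal" is folklore but its cleanest proof goes through the dimension of the Springer fibre $\mathcal B_u$, and one should be careful that this works uniformly in all characteristics (the formula $\dim \mathcal B_u = \tfrac12(\dim C_G(u) - \mathrm{rk}(G))$ does hold for unipotent $u$ in any characteristic by Steinberg's work). A secondary subtlety is the characterization "element of $R_u(B)$ is regular iff all simple-root coordinates are nonzero," which again is standard but characteristic-sensitive in its usual textbook form; the safest implementation cites Steinberg's conjugacy-classes notes directly for both ingredients rather than reconstructing them. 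If one wishes to avoid the Springer-fibre dimension formula altogether, an alternative for part (ii) that sidesteps coordinates is: $u \in R_u(P)$ implies $u \in R_u(P_\lambda) \cap R_u(P_{-\lambda}') $ fails — rather, note $u \in R_u(P_\lambda) \subseteq R_u(B)$ but also the one-parameter subgroup $\lambda$ centralizes $u$ modulo... — but I think the coordinate argument above is the most transparent, so I would present that.
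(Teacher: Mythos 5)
Your proof is correct, and for part (ii) and the Lie algebra statement it is essentially identical to the paper's argument: both write $u=\prod_{\alpha\in\Phi^+}x_\alpha(c_\alpha)$ with $c_\alpha\neq 0$ for every simple root $\alpha$ (citing Steinberg's \S 3.7, Thm.~1) and observe that membership in $R_u(P_\lambda)$ kills the coordinate along any simple root with $\langle\lambda,\alpha\rangle=0$, forcing $P_\lambda=B$; likewise for $e$ via its standard form. The only genuine divergence is in part (i): the paper simply cites Steinberg's theorem for the \emph{full} equivalence (that result already states that a unipotent element is regular if and only if it lies in a unique Borel subgroup), whereas you cite it only for the forward direction and reprove the converse through the dimension formula $\dim\mathcal{B}_u=\tfrac12(\dim C_G(u)-\mathrm{rk}\,G)$. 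That route works --- for the converse you only need "not regular $\Rightarrow$ $\dim\mathcal{B}_u>0$ $\Rightarrow$ infinitely many Borels" --- but it imports heavier machinery than necessary, and your intermediate equivalence ``$\mathcal{B}_u$ is a single point $\iff\dim\mathcal{B}_u=0$'' silently uses connectedness of the Springer fibre (harmless here, since that step is only needed in the direction already covered by the citation). Net effect: same proof where it matters, with a slightly more elaborate detour on a sub-point the paper disposes of by a single reference.
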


\begin{proof}
Part (i) is \cite[Sec.~3.7, Thm.~1]{steinberg:conjugacy}, and the analogue for the Lie algebra is \cite[Cor.~6.8]{Jantzen}.
If $P$ is a parabolic subgroup containing $u$, then $P$ contains $B$, 
and with respect to a suitable choice of maximal torus $T$ of $B$, we may write
$u = \prod_{\alpha\in \Phi^+} x_\alpha$, 
where each $x_\alpha \in X_\alpha$ and $x_\alpha \neq 1$ for each simple root $\alpha$, cf.~\cite[Sec.~3.7, Thm.~1]{steinberg:conjugacy}.
Since $u$ has a non-trivial contribution from each simple root group, $u$ can only lie in $R_u(P)$ if $P=B$.
The analogous argument works for $e$, which has a standard form involving a non-trivial contribution from 
each root space $\Lie(X_\alpha)$ relative to any simple root $\alpha$, cf.~\cite[6.7(1)]{Jantzen}.
\end{proof}

\begin{thm}\label{thm:connected}
Let $H \subseteq G$ be connected reductive groups. 
If $H$ contains a regular unipotent element of $G$, or $\Lie(H)$ contains a regular nilpotent element of $\Lie(G)$, 
then $H$ is $G$-irreducible.
\end{thm}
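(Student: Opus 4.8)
The plan is to argue by contradiction: suppose $H$ is not $G$-irreducible, so $H$ lies in a proper parabolic subgroup $P$ of $G$. Since $H$ is connected, we may pass to the identity component and assume $P$ is the parabolic subgroup $P_\lambda$ for some $\lambda \in Y(G)$, with R-Levi $L = L_\lambda$ and unipotent radical $R_u(P_\lambda)$.

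First I would dispose of the nilpotent case, which follows from the unipotent case (or runs in parallel): a regular nilpotent $e \in \Lie(H) \subseteq \Lie(P) = \Lie(L) \oplus \Lie(R_u(P))$ projects to an element of $\Lie(L)$; I want to show this projection is still regular nilpotent, which would contradict the fact that $e$, being regular, lies in a unique Borel subalgebra and forces $P = B$ by Lemma~\ref{lem:conreg}. Actually the cleaner route is: since $u \in H \subseteq P_\lambda$, the limit $v := \lim_{t\to 0}\lambda(t) u \lambda(t)^{-1}$ exists and lies in the R-Levi $L_\lambda = C_G(\mathrm{Im}(\lambda))$; this $v$ is unipotent and is the image of $u$ under the canonical projection $P_\lambda \to L_\lambda$. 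Now $H \subseteq P_\lambda$ and $H$ connected means $H$ also maps into $L_\lambda$ under this projection — call the image $\bar H$, a connected reductive subgroup of $L_\lambda$ containing $v$.

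The key step — and the main obstacle — is to show that $v$ is a regular unipotent element of the connected reductive group $L_\lambda$, so that by induction on $\dim G$ (or directly) we can iterate, but more to the point: $u$ and $v$ generate closures of conjugacy classes that are related, and I want to contradict Lemma~\ref{lem:conreg}(i), that a regular unipotent element of $G$ lies in a \emph{unique} Borel subgroup. The cleanest argument: let $B$ be the unique Borel of $G$ with $u \in B$. Since $H \subseteq P_\lambda$ with $P_\lambda$ proper, $H$ contains $u$, and every Borel of $G$ containing $u$ must lie in every parabolic containing $u$; so $B \subseteq P_\lambda$. But also $H \subseteq L_\lambda \cdot R_u(P_\lambda)$, and one shows $u$ is conjugate in $P_\lambda$ to its Levi-component $v$, with $v \in L_\lambda$ regular in $L_\lambda$ — using that $C_G(u)$ has dimension equal to the rank of $G$, that $C_{P_\lambda}(u) \subseteq C_G(v)$ after the conjugation, and a dimension count comparing $\mathrm{rk}(L_\lambda)$ with $\mathrm{rk}(G)$ forces $u \in L_\lambda$ up to conjugacy, hence $u$ lies in a Borel of $L_\lambda$, which is contained in more than one Borel of $G$ (since $L_\lambda \subsetneq G$ has strictly larger such Borels available via $R_u(P_\lambda)$) — contradiction.

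A slicker alternative, which I would actually prefer to write up, uses Lemma~\ref{lem:conreg}(ii) directly: take $B$ the unique Borel of $G$ containing $u$. If $H$ lies in a proper parabolic $P$, then $B \subseteq P$, and writing $u$ in its canonical form $u = \prod_{\alpha \in \Phi^+} x_\alpha$ with nontrivial contribution from each simple root, the fact that $H$ is connected reductive forces $H$ to be normalised by — no; rather, the point is that $H \subseteq P$ and $H$ reductive means $H$ is conjugate by an element of $R_u(P)$ into $L_\lambda$ only if $R_u(P) \cap H$ behaviour allows it, which in general it does not. So the robust approach remains the projection-plus-regularity argument. I expect the technical heart to be proving that the image $v$ of $u$ in $L_\lambda$ is regular unipotent in $L_\lambda$: this needs the identity $\dim C_{L_\lambda}(v) = \mathrm{rk}(L_\lambda) = \mathrm{rk}(G) = \dim C_G(u)$, which follows because $\dim C_{P_\lambda}(u) \geq \dim C_{L_\lambda}(v)$ always, while $\dim C_{P_\lambda}(u) = \dim C_G(u) - \dim(G \cdot u \cap P_\lambda \text{ complement})$ forces equality by a Bruhat-type estimate; once $v$ is regular in $L_\lambda$, Lemma~\ref{lem:conreg}(i) applied in $L_\lambda$ gives a unique Borel $B_L \ni v$, while $B_L \cdot R_u(P_\lambda)$ and its $L_\lambda$-conjugates give distinct Borels of $G$ all containing a suitable conjugate of $u$, contradicting uniqueness in $G$ and completing the proof.
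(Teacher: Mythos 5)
Your proposal has a genuine gap: the final contradiction does not materialize, and along the way you rely on a claim that is false. The concluding step --- that ``$B_L \cdot R_u(P_\lambda)$ and its $L_\lambda$-conjugates give distinct Borels of $G$ all containing a suitable conjugate of $u$, contradicting uniqueness in $G$'' --- is not a contradiction: Lemma~\ref{lem:conreg}(i) concerns the \emph{fixed} element $u$, and \emph{every} Borel subgroup of $G$ contains some conjugate of $u$, so exhibiting many Borel subgroups each containing some conjugate proves nothing. The intermediate claim that a dimension count ``forces $u\in L_\lambda$ up to conjugacy'' is false: a regular unipotent element is distinguished (cf.\ Remark~\ref{rem:distinguished}), so its class meets no proper Levi subgroup --- otherwise $Z(L_\lambda)^0$ would be a non-central torus in the centralizer of a regular unipotent element. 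The limit $v=\lim_{t\to 0}\lambda(t)u\lambda(t)^{-1}$ is indeed regular unipotent in $L_\lambda$, but it is \emph{not} conjugate to $u$ in $G$: for $G=\SL_3$ and $P_\lambda$ a maximal parabolic, $u$ has Jordan type $(3)$ while $v$ has type $(2,1)$. Finally, showing by induction that the image $\bar H$ of $H$ in $L_\lambda$ is $L_\lambda$-irreducible is perfectly consistent with $H\subseteq P_\lambda$ being proper, so the projection-to-the-Levi strategy stalls there as well.

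The structural problem is that your argument never uses the reductivity of $H$ in an essential way: the subgroup generated by $u$ alone lies in the proper parabolic subgroup $B$, so any correct proof must exploit more of $H$ than the single element $u$. The paper's proof does exactly this. It takes a Borel subgroup $B_H=P_\lambda(H)$ of $H$ containing $u$, with $\lambda\in Y(S)$ for a maximal torus $S$ of $B_H$; since $u\in R_u(B_H)\subseteq R_u(P_\lambda)$, Lemma~\ref{lem:conreg}(ii) shows that $P_\lambda$ is the unique Borel subgroup of $G$ containing $u$. An element $x\in H$ conjugating $B_H$ to the opposite Borel subgroup $P_{-\lambda}(H)$ of $H$ then produces a second regular unipotent element $v=xux^{-1}\in H$ whose unique Borel subgroup in $G$ is $P_{-\lambda}$. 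Any parabolic subgroup of $G$ containing $H$ contains both $u$ and $v$, hence both $P_\lambda$ and $P_{-\lambda}$, hence equals $G$. This pair of opposite Borel subgroups of $G$, extracted from the internal Borel structure of $H$, is the ingredient your argument is missing.
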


\begin{proof}
Suppose $u \in H$ is a regular unipotent element of $G$.
Let $B$ be a Borel subgroup of $H$ containing $u$, let $S$ be a maximal torus of $B$, and write $B = P_\lambda(H)$ for some $\lambda\in Y(S)$.
Then $u$ belongs to $R_u(B)$, so $\lim_{t\to 0} \lambda(t)u\lambda(t)^{-1} = 1$, so $u\in R_u(P_\lambda(H)) \subseteq R_u(P_\lambda)$.  It follows from Lemma~\ref{lem:conreg}(ii) that $P_\lambda$ is the unique Borel subgroup of $G$ containing $u$.

Now let $B^- = P_{-\lambda}(H)$ be the opposite Borel subgroup of $H$ with respect to the maximal torus $S$ of $H$.
The Borel subgroups $B$ and $B^-$ of $H$ are conjugate, say by $x\in H$.
Let $v = xux^{-1}\in P_{-\lambda}(H)\subseteq H$.
Since $v$ is $H$-conjugate to $u$, $v$ is also a regular unipotent element of $G$ belonging to $H$.
The argument of the first paragraph shows that $P_{-\lambda}$ is the unique Borel subgroup of $G$ containing $v$.

Now suppose $P$ is a parabolic subgroup of $G$ containing $H$.
Then $P$ contains $u$ and $v$, and hence must contain Borel subgroups normalized by $u$ and $v$, by Lemma~\ref{lem:fixedborel}.
But a Borel subgroup of $P$ is a Borel subgroup of $G$, so uniqueness forces $P$ to contain the opposite Borel subgroups $P_\lambda$ and $P_{-\lambda}$ of $G$.
This implies that $P=G$, so $H$ is $G$-ir, as required.

The proof in the case that $\Lie(H)$ contains a regular nilpotent element of $\Lie(G)$ is essentially the same --- 
given a parabolic subgroup $P$ of $G$ containing $H$, $\Lie(P)$ must contain a pair of opposite Borel subalgebras of $\Lie(G)$,
and therefore $\Lie(P) = \Lie(G)$, which means that $P=G$.
\end{proof}

\begin{rem}
	\label{rem:reg}
Note that it follows from Lemma~\ref{lem:conreg}(i) that if $H$ is a connected reductive subgroup of $G$ and $u\in H$ is a regular unipotent element of $G$, then $u$ is a regular unipotent element of $H$.
To see this, let $B$ be a Borel subgroup of $H$ containing $u$ and let $B'$ be a Borel subgroup of $G$ containing $B$.
Since $u\in B\subseteq B'$, $B'$ must be the unique Borel subgroup of $G$ containing $u$. 
Maximality of $B$ amongst connected solvable subgroups of $H$ implies that $B = (B'\cap H)^0$ is the only Borel subgroup of $H$ containing $u$, 
and we're done.
See Lemma~\ref{lem:regsub} below for this result in full generality.
\end{rem}

\begin{proof}[Proof of Theorem~\ref{thm:finite}]
	By Remark~\ref{rem:reg}, if $H$ contains a regular unipotent element of $G$, then the regular unipotent elements of $H$ are the regular unipotent elements of $G$ contained in $H$, since these elements form a single $H$-conjugacy class in $H$.  It follows from \cite[III.1.19]{springersteinberg} applied to $H$ that we may find a regular unipotent element $u$ of $G$ lying in $H_\sigma$. 
	Since $u$ is fixed by $\sigma$, the unique Borel subgroup $B$ of $H$ containing $u$ is $\sigma$-stable. 
	By Lemma~\ref{lem:fixedboreltorus}(i), there is a $\sigma$-stable maximal torus $S$ in $B$, and the opposite Borel subgroup $B^-$ to $B$ in $H$ with respect to $S$ is also $\sigma$-stable. Thanks to Lemma~\ref{lem:fixedboreltorus}(ii), $B$ and $B^-$ are conjugate by an element $x \in H_\sigma$. Thus $v = x u x^{-1}$ is a regular unipotent element of $G$ which belongs to $B^-$ and $H_\sigma$. The rest of the proof of Theorem~\ref{thm:connected} now goes through for $H_\sigma$.
	\end{proof}

\section{Regular unipotent elements}
\label{sec:regunipotent}

We collect some results about unipotent elements in non-connected reductive groups from \cite{spaltenstein};
many of these are the analogues of more familiar results for connected reductive groups. 

Following Spaltenstein \cite{spaltenstein}, we say a connected component $X$ of $G$ is \emph{unipotent} if it contains a unipotent element.  
Let $X$ be a unipotent component of $G$.  
Spaltenstein showed there is a unique unipotent $G^0$-conjugacy class $C$ in $X$ such that $C$ is dense in the set of all unipotent elements of $X$ \cite[I.4.8]{spaltenstein}.  
We call elements of $C$ \emph{regular unipotent elements} of $X$; this agrees with the usual notion if $G = G^0$. 
We say that $u\in G$ is \emph{regular unipotent} if $u$ is a regular unipotent element of some unipotent component $X$ of $G$.

\begin{ex}
A complete classification of unipotent classes when $G^0$ is simple can be found in \cite{spaltenstein}.
The essential case to consider is when the Dynkin diagram has an automorphism of order $p$.
For example, let $p=3$ and suppose $G = \langle x, G^0\rangle$, where $G^0$ has type $D_4$ and $x$ is the triality automorphism.
Then the regular unipotent elements in the component $X=xG^0$ 
are all $G^0$-conjugate to the element $xx_\alpha(1) x_\delta(1)$, where $\delta$ is the simple root corresponding to the central node on the Dynkin diagram, $\alpha$ is one of the other simple roots, and $x_\alpha$ and $x_\delta$ are the corresponding root group homomorphisms,
see \cite[I.3.1]{spaltenstein}.
\end{ex}

An element $x$ of $G$ is called \emph{quasisemisimple} if there exist a Borel subgroup $B$ of $G$ and a maximal torus $T$ of $G$ such that $x$ normalizes both $B$ and $T$ \cite[I.1.1]{spaltenstein}.  
This notion was introduced by Steinberg in case $G$ is connected \cite[Sec.~9]{steinberg:end}.  
Spaltenstein shows that any unipotent component $X$ of $G$ contains a unique $G^0$-class of quasisemisimple unipotent elements \cite[Cor.~II.2.21]{spaltenstein}, and
in fact the quasisemisimple unipotent elements in $X$ form the unique closed $G^0$-orbit in the set
of all unipotent elements in $X$ \cite[Cor.~II.2.22]{spaltenstein}.  
We give an alternative construction which works for arbitrary elements of $G$ using the machinery of $G$-complete reducibility; the
link here is that for any element $x\in G$, the $G^0$-conjugacy class of $x$ is closed if and only if the subgroup of $G$ generated by $x$ is $G$-cr, cf.~\cite[Cor.\ 3.7, Sec.~6]{BMR}. 

\begin{lem}\label{lem:quasiclass}
Let $g\in G$ and let $X$ be the component of $G$ containing $g$.
\begin{itemize}
\item[(i)] There is a unique closed $G^0$-conjugacy class in $\overline{G^0\cdot g}$, and this is a $G^0$-conjugacy class of
quasisemisimple elements in $X$.
\item[(ii)] If, in addition, $g$ is unipotent, then this quasisemisimple class is the unique closed $G^0$-orbit of unipotent elements in $X$.
\end{itemize}
\end{lem}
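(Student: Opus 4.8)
The plan is to prove (i) first --- obtaining both the uniqueness clause and the quasisemisimplicity of the closed class --- and then to bootstrap to (ii). For (i), observe that the component $X$ of $G$ containing $g$ is an affine variety on which the connected reductive group $G^0$ acts morphically by conjugation. I would invoke the standard fact that, for a connected reductive group acting morphically on an affine variety, the closure of any orbit contains a \emph{unique} closed orbit (see, e.g., \cite{kempf}, or the discussion in \cite[Sec.~3]{BMR}). Applied to $g$, this says that $\overline{G^0\cdot g}$ contains exactly one closed $G^0$-orbit, say $C = G^0\cdot x$ with $x\in\overline{G^0\cdot g}\subseteq X$; this gives the uniqueness assertion in (i), so it remains to prove that $x$ is quasisemisimple.

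For that I would combine three ingredients. First, since $C$ is closed, the subgroup $\langle x\rangle$ is $G$-completely reducible, by the criterion recalled just before the lemma (\cite[Cor.~3.7, Sec.~6]{BMR}). Second, by Lemma~\ref{lem:fixedborel} the element $x$ normalizes some Borel subgroup $B$ of $G^0$, so $x\in N_G(B)$; by Lemma~\ref{lem:N_G(P)}, $N_G(B) = P_\mu$ for some $\mu\in Y(G)$ with $P_\mu^0 = B$. Third, $G$-complete reducibility of $\langle x\rangle\subseteq P_\mu$ yields an R-Levi subgroup of $P_\mu$ containing $x$, and since all R-Levi subgroups of $P_\mu$ are conjugate by elements of $R_u(P_\mu)\subseteq G^0$, I may replace $x$ by a $G^0$-conjugate (still lying in $C$) and assume $x\in L_\mu$. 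Now $P_\mu^0 = B$ is a Borel subgroup, so its R-Levi $L_\mu$ has identity component $T := L_\mu^0$ a maximal torus of $G^0$; since $T$ is characteristic in $L_\mu$ and $L_\mu\subseteq P_\mu = N_G(B)$, the element $x$ normalizes both $T$ and $B$, with $T\subseteq B$, so $x$ is quasisemisimple. As quasisemisimplicity is preserved under $G^0$-conjugation, the whole class $C$ consists of quasisemisimple elements of $X$, completing (i).

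For (ii), suppose in addition that $g$ is unipotent, so $X$ is a unipotent component. The set $X_u$ of unipotent elements of $X$ is closed in $X$ (unipotency is a closed condition on $G$), and by \cite[I.4.8]{spaltenstein} it equals the closure of the single $G^0$-class of regular unipotent elements of $X$; in particular $X_u$ is irreducible, so $X_u = \overline{G^0\cdot u}$ for a regular unipotent element $u$ of $X$, and the fact quoted above shows $X_u$ contains a unique closed $G^0$-orbit, which I call $C_0$. Since $\overline{G^0\cdot g}\subseteq X_u$, the closed orbit $C$ from (i) is a closed $G^0$-orbit contained in $X_u$, hence $C = C_0$. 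Thus $C$ is simultaneously the unique closed $G^0$-orbit in $\overline{G^0\cdot g}$ and the unique closed $G^0$-orbit of unipotent elements in $X$, which is (ii).

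The one step needing genuine care is the quasisemisimplicity argument in (i): one must verify that $L_\mu^0$ is indeed a maximal torus when $P_\mu^0$ is a Borel subgroup, and that conjugating $x$ into $L_\mu$ by an element of $R_u(P_\mu)$ keeps it inside the fixed class $C$ (it does, as $R_u(P_\mu)\subseteq G^0$). Everything else is routine bookkeeping with the dictionary between closed $G^0$-orbits, $G$-complete reducibility, and R-parabolic subgroups established earlier in the paper.
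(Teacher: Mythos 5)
Your proof is correct, and part (ii) matches the paper's argument exactly; but your route to the quasisemisimplicity claim in (i) is genuinely different from the paper's. The paper works ``downwards from $g$'': it takes a \emph{minimal} R-parabolic subgroup $P$ containing $g$, uses Lemma~\ref{lem:fixedborel}, Lemma~\ref{lem:N_G(P)} and \cite[Cor.~6.9]{BMR} to show $P^0$ is a Borel subgroup $B$, and then defines the closed-orbit representative explicitly as the limit $x=\lim_{t\to 0}\lambda(t)g\lambda(t)^{-1}\in L_\lambda$, invoking the semisimplification result \cite[Ex.~4.8]{BMR:semisimplification} to see that $\langle x\rangle$ is $G$-cr and hence that $G^0\cdot x$ is closed and lies in $\overline{G^0\cdot g}$; quasisemisimplicity then falls out because $x\in P_\lambda\cap L_\lambda$ normalizes $B=P_\lambda^0$ and $T=L_\lambda^0$. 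You instead start from the abstractly given unique closed orbit and run the $G$-cr/closed-orbit dictionary in the opposite direction: closedness of $C$ gives $G$-complete reducibility of $\langle x\rangle$, and then Lemma~\ref{lem:fixedborel}, Lemma~\ref{lem:N_G(P)} and the simple transitivity of $R_u(P_\mu)$ on the R-Levi subgroups of $P_\mu$ (this last fact is \cite[Lem.~6.2(iii)]{BMR}, not restated in the paper's preliminaries, so you should cite it) let you move $x$ within its $G^0$-class into $L_\mu$, where it visibly normalizes $B$ and the maximal torus $L_\mu^0$. Your version avoids both the minimal-parabolic argument and the semisimplification machinery, at the cost of not exhibiting an explicit limit representative of the closed class; the paper's construction has the small side benefit, used in its proof of (ii), that the representative is manifestly a degeneration of $g$ (in particular unipotent when $g$ is), whereas you recover that from closedness of the unipotent variety, which is equally fine.
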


\begin{proof}
(i). First, the uniqueness is a standard property of orbits of reductive algebraic groups ---
for any $G^0$-action on an affine variety, 
there is a unique closed $G^0$-orbit in the closure of any $G^0$-orbit.

Let $P$ be a minimal R-parabolic subgroup of $G$ containing $g$.
There exists a Borel subgroup $B$ of $P$ normalized by $g$, by 
Lemma~\ref{lem:fixedborel},
and $N_G(B)$ is an R-parabolic subgroup of $G$ containing $g$ by Lemma~\ref{lem:N_G(P)}.
Since $B\subseteq P^0$, we have $R_u(P)\subseteq R_u(B)$, and \cite[Cor.\ 6.9]{BMR} shows that $P\cap N_G(B)$ is an 
R-parabolic subgroup of $G$ containing $g$. 
But this means that $P\subseteq N_G(B)$ by the minimality of $P$, and hence $P^0 = B$.
Let $T$ be a maximal torus of $P$, let $L$ be the R-Levi subgroup of $P$ with $L^0=T$, and let $\lambda\in Y(G)$ be such that $P=P_\lambda$ and $L=L_\lambda$.
It follows from \cite[Ex.\ 4.8]{BMR:semisimplification} that
$x:=\lim_{t\to 0}\lambda(t)g\lambda(t)^{-1} \in L$ generates a 
$G$-cr subgroup of $G$, and hence the $G$-orbit of $x$ is closed by \cite[Cor.\ 3.7, Sec.\ 6]{BMR}.
Since $x\in P$, $x$ normalizes $P^0 = B$; since $x\in L$, $x$ normalizes $L^0=T$; since the $G$-conjugacy class of $x$ is closed, so is the $G^0$-conjugacy class; since $x$ is obtained as a limit from $g$ along a cocharacter which evaluates in $G^0$, $x\in\overline{G^0\cdot g}$; since $X$ is a closed subset of $G$, we also have $x\in X$.  Moreover, if $g$ is unipotent then $x$ is unipotent, since the set of unipotent elements is closed in $G$.

(ii). Note that since the class of regular unipotent elements in $X$ is dense in the set of all unipotent elements \cite[I.4.8]{spaltenstein}, it follows that there is only one closed $G^0$-orbit of unipotent elements in $X$, and it must be the one constructed in the first paragraph for any unipotent $g\in X$.
\end{proof}

Spaltenstein also proves the following \cite[Prop.~II.10.2]{spaltenstein}, which is the crucial ingredient in the proof of Theorem~\ref{thm:main}.

\begin{prop}
\label{prop:regunipcrit}
 Let $u\in G$ be unipotent.  Then $u$ is regular unipotent if and only if $u$ normalizes a unique Borel subgroup of $G$.
\end{prop}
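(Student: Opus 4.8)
The plan is to reduce to the connected case, Lemma~\ref{lem:conreg}(i), via the structure theory of quasisemisimple elements supplied by Lemma~\ref{lem:quasiclass}. Let $X$ be the connected component of $G$ containing $u$. First dispose of the case $u\in G^0$; this includes the case $p=0$, since in characteristic zero the Zariski closure of $\langle u\rangle$ is a connected unipotent subgroup, whence $u\in G^0$. When $u\in G^0$ we have $X=G^0$, ``regular unipotent element of $X$'' is the usual notion, and since a unipotent element of $G^0$ normalizes a Borel subgroup $B$ of $G^0$ if and only if it lies in $B$, the assertion is precisely Lemma~\ref{lem:conreg}(i). So assume $p>0$ and $u\notin G^0$.

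By Lemma~\ref{lem:quasiclass}(ii) fix a quasisemisimple unipotent element $\tau$ in the unique closed $G^0$-orbit of unipotent elements of $X$, say normalizing a Borel subgroup $B_\tau$ and a maximal torus $T_\tau\subseteq B_\tau$ of $G^0$, and put $C:=C_{G^0}(\tau)^0$. I would use the following standard facts about quasisemisimple elements (Steinberg \cite{steinberg:end}; Spaltenstein \cite{spaltenstein}): $C$ is a connected reductive group with Borel subgroup $(B_\tau\cap C)^0$; every unipotent element of $X$ is $G^0$-conjugate to one of the form $\tau v$ with $v\in C$ unipotent; $\tau v$ is a regular unipotent element of $X$ if and only if $v$ is a regular unipotent element of $C$; and, for each unipotent $v\in C$, the canonical identification of the $\tau$-stable Borel subgroups of $G^0$ with the Borel subgroups of $C$ restricts to a bijection between the Borel subgroups of $G^0$ normalized by $\tau v$ and the Borel subgroups of $C$ normalized by $v$. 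Granting these, the number of Borel subgroups of $G^0$ normalized by $u=\tau v$ equals the number of Borel subgroups of $C$ normalized by $v$, and by Lemma~\ref{lem:conreg}(i) applied to the connected reductive group $C$ this number is $1$ exactly when $v$ is a regular unipotent element of $C$, i.e.\ exactly when $u$ is a regular unipotent element of $X$. Both implications follow at once.

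The step I expect to be the crux is the last of the quoted facts --- concretely, the surjectivity of that bijection, i.e.\ the claim that a Borel subgroup of $G^0$ normalized by $\tau v$ is automatically $\tau$-stable. This is not formal, because $\tau$ is not the semisimple part of the unipotent element $\tau v$. I would prove it along the lines of the limit construction in the proof of Lemma~\ref{lem:quasiclass}: given a Borel subgroup $B'$ of $G^0$ normalized by $\tau v$, write $N_G(B')=P_\mu$ for a suitable cocharacter $\mu$ with $P_\mu^0=B'$ (using Lemma~\ref{lem:N_G(P)}), observe that $\sigma':=\lim_{t\to 0}\mu(t)(\tau v)\mu(t)^{-1}$ is a quasisemisimple unipotent element normalizing $B'$ and lying in $\overline{G^0\cdot(\tau v)}$, hence --- as this closure contains the unique closed orbit of unipotent elements of $X$ --- is $G^0$-conjugate to $\tau$; after adjusting the conjugating element by an element of $C$ (which acts transitively on the $\tau$-stable Borel subgroups) one reduces to $\sigma'=\tau$ and $\tau v\in\tau\,R_u(B')$, and then a dimension count --- using Spaltenstein's density theorem \cite[I.4.8]{spaltenstein} for the variety of unipotent elements of $X$ together with a description of the unipotent elements lying in the coset $N_G(B')\cap X$ --- forces $B'$ to be $\tau$-stable. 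An alternative, which circumvents the bijection altogether, is to run this same dimension count on the incidence variety of pairs consisting of a unipotent element of $X$ and a Borel subgroup of $G^0$ that it normalizes, deduce that the variety of Borel subgroups normalized by a regular unipotent $u$ is zero-dimensional (and positive-dimensional otherwise), and then invoke connectedness of this ``generalized Springer fibre'' to conclude that it is a single point.
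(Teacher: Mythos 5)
First, a point of comparison: the paper does not prove this proposition at all --- it is quoted verbatim from Spaltenstein \cite[Prop.~II.10.2]{spaltenstein} and used as a black box. So you are attempting to reprove a cited result, and your reduction to the connected case via Lemma~\ref{lem:conreg}(i) is the right instinct in spirit. However, your argument has a genuine gap, and it is not the one you flagged. The problem is with the second and third of your quoted ``standard facts'': it is \emph{not} true in general that every unipotent element of a unipotent component $X=\tau G^0$ is $G^0$-conjugate to $\tau v$ with $v$ unipotent in $C:=C_{G^0}(\tau)^0$, nor that regularity of $\tau v$ in $X$ matches regularity of $v$ in $C$. Concretely, take $G=\langle\tau\rangle\ltimes(\SL_2\times\SL_2)$ with $\tau$ the swap and $p=2$. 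One checks that the $G^0$-class of $\tau(a,b)$ is determined by the $\SL_2$-class of $ab$, so $X$ has exactly two unipotent classes, represented by $\tau$ and by $\tau(1,u)$ with $u\neq 1$ unipotent; the latter is the regular class and has order $4$, since $(\tau(1,u))^2=(u,u)$. But every element $\tau v$ with $v\in C_{G^0}(\tau)$ unipotent satisfies $(\tau v)^2=v^2=1$ in characteristic $2$. Hence the regular unipotent element of $X$ is not conjugate to any $\tau v$ with $v\in C$, and your reduction never sees it. This failure is systematic: unipotent elements exist in an outer coset only when $p$ equals the order of the induced diagram automorphism, which is precisely the degenerate case for the quasisemisimple fixed-point theory. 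The paper's own Example~4.2 (triality on $D_4$, $p=3$) already displays this, since the regular representative $x\,x_\alpha(1)x_\delta(1)$ has a $G^0$-part that is visibly not fixed by $x$.

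Consequently the step you identified as the crux (the bijection between Borel subgroups of $G^0$ normalized by $\tau v$ and Borel subgroups of $C$ normalized by $v$) is moot, because the elements you would feed into it do not exhaust the unipotent elements of $X$ up to conjugacy. Your first paragraph (the case $u\in G^0$, including characteristic $0$) is fine and correctly reduces to Steinberg via Lemma~\ref{lem:conreg}(i), and your closing alternative --- a dimension count on the incidence variety of pairs $(u,B')$ with $u$ unipotent in $X$ normalizing $B'$, showing the fibre over $u$ is finite exactly on the dense class and then using connectedness of this generalized Springer fibre to get a single point --- is much closer to a workable argument (and to Spaltenstein's actual methods in \cite[II.10]{spaltenstein}). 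But as written it is a one-sentence gesture: you would need to establish the dimension formula for the fibres and the connectedness statement in the disconnected setting, neither of which is supplied. As it stands, the proof is not complete, and its main reduction is unsound.
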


We quickly obtain the following, which is also used in the proof of the main theorem.

\begin{lem}\label{lem:containsB}
Let $P$ be an R-parabolic subgroup of $G$ containing a regular unipotent element $u$ of $G$.
Then $P^0$ contains the unique Borel subgroup of $G$ normalized by $u$.  
\end{lem}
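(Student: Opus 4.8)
The plan is to prove this directly from Lemma~\ref{lem:fixedborel} and the uniqueness half of Proposition~\ref{prop:regunipcrit}. Let $B$ denote the unique Borel subgroup of $G^0$ normalized by $u$, which exists precisely because $u$ is regular unipotent (Proposition~\ref{prop:regunipcrit}). The aim is then simply to show $B\subseteq P^0$.

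First I would note that since $u\in P$, conjugation by $u$ is a surjective endomorphism of the linear algebraic group $P$, so Lemma~\ref{lem:fixedborel} supplies a Borel subgroup $B'$ of $P$ --- that is, a Borel subgroup of the connected group $P^0$ --- normalized by $u$. Next I would invoke the standard fact (immediate from the definition of an R-parabolic subgroup: writing $P=P_\lambda$, one has $P^0=P_\lambda\cap G^0=P_\lambda(G^0)$) that $P^0$ is a parabolic subgroup of $G^0$. Since a Borel subgroup of a parabolic subgroup of $G^0$ is itself a Borel subgroup of $G^0$, the group $B'$ is a Borel subgroup of $G^0$ normalized by $u$.

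Finally, the uniqueness assertion in Proposition~\ref{prop:regunipcrit} forces $B'=B$, whence $B=B'\subseteq P^0$, as desired. I do not anticipate a genuine obstacle: the argument is short, and the only point requiring a little care is the (dis)connectedness bookkeeping --- confirming that a ``Borel subgroup of $P$'' in the sense of Lemma~\ref{lem:fixedborel} really lies in $P^0$, and that $P^0$ is parabolic in $G^0$ --- but both of these are recorded or used implicitly earlier in the paper (for instance in the proof of Lemma~\ref{lem:quasiclass}).
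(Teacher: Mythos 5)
Your argument is correct and is essentially the paper's own proof: both apply Lemma~\ref{lem:fixedborel} to conjugation by $u$ on $P$ to produce a Borel subgroup of $P$ normalized by $u$, observe that this is a Borel subgroup of $G^0$, and then invoke the uniqueness in Proposition~\ref{prop:regunipcrit} together with connectedness to conclude it lies in $P^0$. The extra bookkeeping you flag (that a Borel subgroup of $P$ lives in $P^0$ and is a Borel subgroup of $G^0$) is exactly the point the paper also relies on, so there is nothing to add.
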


\begin{proof}
Given that $u\in P$, $u$ normalizes a Borel subgroup $B$ of $P$, by Lemma~\ref{lem:fixedborel}.
But a Borel subgroup of $P$ is also a Borel subgroup of $G$, and so $B$ is the unique Borel subgroup of $G$
normalized by $u$ given by Proposition~\ref{prop:regunipcrit}.
Since $B$ is connected by definition, $B\subseteq P^0$.
\end{proof}

\begin{rem}
\label{rem:isog}
 It follows from Proposition~\ref{prop:regunipcrit} that if $f\colon G_1\to G_2$ is an isogeny of reductive groups and $u\in G_1$ is unipotent then $u$ is regular unipotent in $G_1$ if and only if $f(u)$ is regular unipotent in $G_2$.  This is because a subgroup $B$ of $G_1^0$ is a Borel subgroup of $G_1^0$ if and only if $f(B)$ is a Borel subgroup of $G_2^0$.
\end{rem}

\begin{rem}
	\label{rem:distinguished}
Spaltenstein also introduces the notion of a \emph{distinguished unipotent element} of $G$, \cite[II.3.13]{spaltenstein}; that is, a unipotent element $u$ such that every torus in $C_G(u)$ is central in $G^0$.
It follows from \cite[Prop.\ II.3.16, Prop.\ II.10.2]{spaltenstein} that a regular unipotent element in $G$ is distinguished. For $G$ connected this notion is due to Bala--Carter, cf.~\cite[Sec.~5]{carter:book}.
\end{rem}

\begin{cor}
\label{cor:GcrGir}
Let $H$ be a $G$-completely reducible subgroup of $G$ containing a regular unipotent element $u$ of $G$.  Then $H$ does not normalize any proper parabolic subgroup of $G^0$.
\end{cor}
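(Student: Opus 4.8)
The plan is to argue by contradiction, using the optimality-free route through $G$-complete reducibility together with Lemma~\ref{lem:containsB}. So suppose $H$ normalizes a proper parabolic subgroup $P$ of $G^0$. By Lemma~\ref{lem:N_G(P)}, the normalizer $Q := N_G(P)$ is an R-parabolic subgroup of $G$ with $Q^0 = P$, and $Q$ is a proper R-parabolic subgroup since $P \subsetneq G^0$. As $H$ normalizes $P$ we have $H \subseteq Q$; and since $H$ is $G$-completely reducible, there is an R-Levi subgroup $L$ of $Q$ containing $H$. Writing $Q = P_\lambda$ and $L = L_\lambda$ for a suitable $\lambda \in Y(G)$, we obtain $u \in H \subseteq L_\lambda = P_\lambda \cap P_{-\lambda}$.

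Next I would apply Lemma~\ref{lem:containsB} twice: once to the R-parabolic subgroup $P_\lambda$ and once to $P_{-\lambda}$, both of which contain the regular unipotent element $u$. Let $B$ be the unique Borel subgroup of $G^0$ normalized by $u$ (Proposition~\ref{prop:regunipcrit}). Then $B \subseteq P_\lambda^0$ and $B \subseteq P_{-\lambda}^0$, so $B \subseteq P_\lambda^0 \cap P_{-\lambda}^0 = L_\lambda^0$, the common Levi subgroup of the opposite parabolic subgroups $P_{\pm\lambda}^0$ of $G^0$.

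The closing step is to note that $L_\lambda^0$ is a connected reductive subgroup of $G^0$ containing a Borel subgroup of $G^0$; hence $L_\lambda^0$ is a parabolic subgroup of $G^0$ with trivial unipotent radical, which forces $L_\lambda^0 = G^0$. Consequently $P = Q^0 \supseteq L_\lambda^0 = G^0$, contradicting the properness of $P$, and the corollary follows. I do not expect a substantial obstacle here; the only points requiring a little care are the bookkeeping for non-connected $G$ — namely that $P_\lambda^0 \cap P_{-\lambda}^0 = L_\lambda^0$ (which follows from $L_\lambda = P_\lambda \cap P_{-\lambda}$ upon intersecting with $G^0$), and that Lemma~\ref{lem:containsB} applies to $P_{-\lambda}$ exactly as to $P_\lambda$. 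Everything else is a routine invocation of the cited results.
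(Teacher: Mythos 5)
Your argument is correct, and it opens exactly as the paper's proof does: pass to the R-parabolic subgroup $N_G(P)$ via Lemma~\ref{lem:N_G(P)}, use $G$-complete reducibility to place $H$ (and hence $u$) in an R-Levi subgroup $L_\lambda$, and then derive $L_\lambda^0=G^0$. Where you diverge is in the final step. The paper gets $L_\lambda^0=G^0$ from the fact that regular unipotent elements are \emph{distinguished} (Remark~\ref{rem:distinguished}): since $u\in L_\lambda=C_G(\Im(\lambda))$, the torus $\Im(\lambda)$ centralizes $u$ and so must lie in $Z(G^0)$, whence $L_\lambda\supseteq G^0$. You instead apply Lemma~\ref{lem:containsB} to both $P_\lambda$ and $P_{-\lambda}$, so that the unique Borel subgroup $B$ of $G^0$ normalized by $u$ lies in $P_\lambda^0\cap P_{-\lambda}^0=L_\lambda^0$, and a connected reductive subgroup containing a Borel subgroup of $G^0$ is all of $G^0$. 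Both closings ultimately rest on Spaltenstein's results: yours uses only Proposition~\ref{prop:regunipcrit} (uniqueness of the normalized Borel, via Lemma~\ref{lem:containsB}), while the paper's additionally invokes the distinguishedness statement \cite[Prop.~II.3.16]{spaltenstein}. Your route is marginally longer but is arguably more in the spirit of the opposite-parabolic arguments used for Theorem~\ref{thm:main} itself (compare Lemma~\ref{lem:uniprad} and Proposition~\ref{prop:opp}), and it avoids importing the notion of a distinguished element; the paper's route is a one-line finish once distinguishedness is known. The two small bookkeeping points you flag --- that $P_{\pm\lambda}\cap G^0=P_{\pm\lambda}^0$ so that $P_\lambda^0\cap P_{-\lambda}^0=L_\lambda^0$, and that Lemma~\ref{lem:containsB} applies equally to $P_{-\lambda}$ since $u\in L_\lambda=L_{-\lambda}\subseteq P_{-\lambda}$ --- are indeed the only things to check, and both go through.
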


\begin{proof}
 Suppose $H$ normalizes a parabolic subgroup $P$ of $G^0$.  Then $H\subseteq N_G(P)$, which is an R-parabolic subgroup of $G$ by Lemma~\ref{lem:N_G(P)}.  By hypothesis, $H$ is contained in an R-Levi subgroup $L$ of $N_G(P)$.  Choose  $\lambda\in Y(G)$ such that $N_G(P)= P_\lambda$ and $L= L_\lambda$.  Since $\lambda$ centralizes $u$, $\lambda$ must belong to $Y(Z(G^0))$, by 
 Remark~\ref{rem:distinguished}. It follows that $L_\lambda^0= G^0$, which implies that $P= G^0$.
\end{proof}

We finish the section by showing that the notion of a regular unipotent element behaves nicely when we pass to quotients and reductive subgroups of $G$.

\begin{lem}
\label{lem:regsub}
 Let $u$ be a regular unipotent element of $G$.  Let $H$ be a reductive subgroup of $G$ such that $u\in H$.  Then $u$ is a regular unipotent element of $H$.
\end{lem}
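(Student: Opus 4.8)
The plan is to use Proposition~\ref{prop:regunipcrit} as the defining criterion: to show $u$ is regular unipotent in $H$, I must show $u$ normalizes a unique Borel subgroup of $H^0$. I know by hypothesis that $u$ is regular unipotent in $G$, so by Proposition~\ref{prop:regunipcrit} there is a unique Borel subgroup $B$ of $G^0$ normalized by $u$, and moreover $u$ is distinguished in $G$ (Remark~\ref{rem:distinguished}), so $u$ lies in no proper R-Levi subgroup in an essential way. First I would establish existence: by Lemma~\ref{lem:fixedborel} applied to $H$ (via conjugation by $u$), $u$ normalizes some Borel subgroup $B_H$ of $H^0$. Note $u$ is unipotent in $H$, and $B_H$ being connected solvable lies in $R_u(B_H)\rtimes T_H$ for a maximal torus $T_H$; since $u$ is unipotent I can arrange $u\in R_u(B_H)$ by conjugating within $B_H$ if necessary — actually $u$ automatically lies in $B_H$, and its unipotent part is $u$ itself, but $u$ need not lie in $R_u(B_H)$, so care is needed here. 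The component containing $u$ in $H$ is unipotent (it contains the unipotent element $u$), so $H$ has at least one unipotent component and regular unipotent elements exist in $H$.

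For uniqueness, suppose $B_H$ and $B_H'$ are two Borel subgroups of $H^0$ normalized by $u$. Pick cocharacters $\lambda,\lambda'\in Y(H^0)$ with $P_\lambda(H^0)=B_H$, $P_{\lambda'}(H^0)=B_H'$. The key point is to pass up to $G$: the R-parabolic $P_\lambda$ of $G$ contains $u$ (since $B_H\subseteq P_\lambda$, and $u$ normalizes $B_H$ hence $u\in N_G(B_H)$; but more directly, writing $u = u_r\cdot t$ in $B_H = R_u(B_H)\rtimes T_H$... ) — let me instead argue as follows. By Lemma~\ref{lem:containsB}, since $u$ is regular unipotent in $G$ and lies in any R-parabolic $P$ of $G$ containing it, $P^0$ contains the unique Borel $B$ of $G^0$ normalized by $u$. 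Now $N_G(B_H)$ is an R-parabolic of $G$ (hmm — it is an R-parabolic of $H$, not of $G$). The cleaner route: let $B_G$ be a Borel subgroup of $G^0$ containing $B_H$; then $B_G$ is normalized by $u$ (as $u$ normalizes $B_H$ — wait, that does not give $u$ normalizes $B_G$). So I would instead argue: $u\in B_H\subseteq B_G$, so $B_G$ is \emph{a} Borel of $G^0$ containing $u$, hence by uniqueness $B_G = B$, the unique Borel of $G^0$ normalized by $u$. This forces $B_H\subseteq B$, and similarly $B_H'\subseteq B$ (after choosing $B_G'\supseteq B_H'$ and concluding $B_G' = B$ too). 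So both $B_H$ and $B_H'$ are Borel subgroups of $H^0$ contained in the single Borel subgroup $B$ of $G^0$.

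Finally I would show that a Borel of $H^0$ inside a fixed Borel $B$ of $G^0$ is unique among those normalized by $u$, or more simply that $B_H = (B\cap H^0)^0$ is forced: indeed $B\cap H^0$ is a solvable subgroup of $H^0$ containing $B_H$, and $B_H$ is maximal connected solvable, so $B_H = (B\cap H^0)^0 = B_H'$, giving uniqueness. This is exactly the argument sketched in Remark~\ref{rem:reg} for the connected case, and it goes through verbatim once we know every $u$-normalized Borel of $H^0$ sits inside $B$. The main obstacle I anticipate is the bookkeeping around $u$ lying in a Borel versus normalizing it versus lying in its unipotent radical: one must be careful that $u\in B_H$ (which is immediate from $u$ normalizing the connected solvable $B_H$ and being unipotent, since $B_H$ is self-normalizing in $H^0$... in fact $N_{H^0}(B_H) = B_H$) rather than merely $u\in N_H(B_H)$. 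Once that is pinned down, the ascent to $B$ in $G^0$ and the maximality argument close the proof. (This is the special case of the quotient-and-subgroup stability of regular unipotency alluded to at the end of the section; the quotient case is handled separately via Remark~\ref{rem:isog}-type reasoning.)
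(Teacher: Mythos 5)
Your overall strategy (reduce to Proposition~\ref{prop:regunipcrit}, get existence from Lemma~\ref{lem:fixedborel}, then prove uniqueness by relating Borel subgroups of $H^0$ normalized by $u$ to the unique Borel subgroup $B$ of $G^0$ normalized by $u$) is the right one, but the uniqueness step has a genuine gap. Everything hinges on your claim that $u\in B_H$, which you justify by saying that $B_H$ is self-normalizing in $H^0$. That argument only applies if $u\in H^0$. Since $H$ (and $G$) may be non-connected, $u$ may lie in a non-identity component of $H$; then $u$ normalizes $B_H$ but need not lie in it, and consequently a Borel subgroup $B_G$ of $G^0$ chosen to contain $B_H$ need not contain $u$, nor be normalized by $u$. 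So you cannot conclude $B_G=B$, and the containments $B_H\subseteq B$ and $B_H'\subseteq B$ --- on which your final maximality argument $B_H=(B\cap H^0)^0=B_H'$ rests --- are not established. In effect your proof is the argument of Remark~\ref{rem:reg}, which is exactly the special case $u\in H^0$; the content of Lemma~\ref{lem:regsub} beyond that remark is precisely the case you have not covered.

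The paper closes this gap by working with normalizers rather than with the Borel subgroups themselves. Writing $B_2=hB_1h^{-1}$ with $h\in H^0$, one notes that $N_H(B_1)$ and $N_H(B_2)=hN_H(B_1)h^{-1}$ are R-parabolic subgroups of $H$ with identity components $B_1$ and $B_2$ (Lemma~\ref{lem:N_G(P)}) which \emph{do} contain $u$, and one writes them as $P_\lambda(H)$ and $P_{h\cdot\lambda}(H)$ for a single $\lambda\in Y(H)$. Viewing $\lambda$ and $h\cdot\lambda$ as cocharacters of $G$, Lemma~\ref{lem:containsB} shows that the parabolic subgroups $P_\lambda^0$ and $P_{h\cdot\lambda}^0=hP_\lambda^0h^{-1}$ of $G^0$ both contain the unique Borel subgroup $B$ of $G^0$ normalized by $u$; conjugate parabolic subgroups of $G^0$ sharing a common Borel subgroup coincide, so $h$ normalizes $P_\lambda^0$, whence $h\in P_\lambda^0\cap H^0=P_\lambda(H)^0=B_1$ and $B_1=B_2$. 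If you want to salvage your route instead, you would need to prove that every Borel subgroup of $H^0$ normalized by $u$ is contained in $B$ without assuming $u\in B_H$; that is essentially what the detour through $N_H(B_i)$ and Lemma~\ref{lem:containsB} accomplishes.
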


\begin{proof}
 It is enough by Proposition~\ref{prop:regunipcrit} to show that $u$ normalizes a unique Borel subgroup of $H$.  By Lemma~\ref{lem:fixedborel}, $u$ normalizes at least one Borel subgroup of $G$.  Suppose $B_1$ and $B_2$ are Borel subgroups of $H$ normalized by $u$, and let $h\in H^0$ be such that $B_2 = hB_1h^{-1}$.
Then $N_H(B_i)$ is an R-parabolic subgroup of $H$ containing $u$, with $N_H(B_i)^0 = B_i$, for $i=1,2$, by Lemma~\ref{lem:N_G(P)}.
Note also that $N_H(B_2) = hN_H(B_1)h^{-1}$.
Thus we may find a cocharacter $\lambda\in Y(H)$ with $N_H(B_1) = P_\lambda(H)$ and $N_H(B_2) = hP_\lambda(H)h^{-1} = P_{h\cdot\lambda}(H)$.

Now $P_\lambda^0$ and $P_{h\cdot\lambda}^0$ are parabolic subgroups of $G^0$ normalized by $u$, 
and hence $P_\lambda^0$ and $P_{h\cdot\lambda}^0$ both contain 
the unique Borel subgroup of $G$ normalized by $u$, by Lemma~\ref{lem:containsB}.
But conjugate parabolic subgroups of $G^0$ containing a common Borel subgroup are equal, so $P_\lambda^0 = P_{h\cdot\lambda}^0 = hP_\lambda^0h^{-1}$.
Since $h\in H^0\subseteq G^0$ normalizes the parabolic subgroup $P_\lambda^0$, we have $h\in P_\lambda^0\cap H^0 = P_\lambda(H^0) = P_\lambda(H)^0 = B_1$.
We finally conclude that $B_1=B_2$, as required.
\end{proof}

The special case of Lemma~\ref{lem:regsub} when $G$ is simple and $H$ is connected is \cite[Lem.~2.10]{MT}. 

\begin{lem}
\label{lem:regquot}
 Let $u$ be a regular unipotent element of $G$.  Let $G'$ be a quotient of $G$ and let $\pi\colon G\to G'$ be the canonical projection.  Then $\pi(u)$ is a regular unipotent element of $G'$.
\end{lem}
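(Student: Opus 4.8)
The plan is to use the characterisation of regular unipotent elements in terms of Borel subgroups from Proposition~\ref{prop:regunipcrit}: it suffices to show that $\pi(u)$ normalizes a unique Borel subgroup of $G'$. First I would reduce to the case where $\pi$ is surjective with kernel a normal subgroup $N$ of $G$; write $G' = G/N$. Since $G$ is reductive and $G'$ is a quotient, $G'$ is reductive (a quotient of a reductive group is reductive), so Proposition~\ref{prop:regunipcrit} applies to $G'$. The key fact I would invoke is the standard correspondence between parabolic (and Borel) subgroups of $G$ containing the kernel $N$ and parabolic (resp.\ Borel) subgroups of $G'$: $B \mapsto \pi(B)$ is a bijection between Borel subgroups of $G^0$ containing $N^0 \cap G^0$ (equivalently, the image of the unipotent radical/appropriate normal part) and Borel subgroups of $(G')^0$, with inverse $B' \mapsto \pi^{-1}(B')$. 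One has to be a little careful because $N$ need not be connected, but $N^0$ is, and every Borel subgroup of $G^0$ contains every connected normal subgroup consisting of semisimple-plus-unipotent data appropriately — more precisely, a Borel subgroup of $G^0$ contains $N^0$ whenever $N^0$ is solvable, and in general contains the image of $N^0$ under the projection to $[G^0,G^0]$; in any case the key point is that $\pi$ induces a bijection between Borel subgroups of $G^0$ and Borel subgroups of $(G')^0$ (cf.\ Remark~\ref{rem:isog} for the isogeny case; the general quotient case is analogous, as every Borel subgroup of $G^0$ has full image which is a Borel subgroup of $(G')^0$, and conversely preimages of Borels are Borels since the kernel lies in every Borel).

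Given this correspondence, the argument is short. Let $B$ be the unique Borel subgroup of $G^0$ normalized by $u$ (Proposition~\ref{prop:regunipcrit}). Then $\pi(B)$ is a Borel subgroup of $(G')^0$ normalized by $\pi(u)$. Conversely, if $B'$ is any Borel subgroup of $(G')^0$ normalized by $\pi(u)$, then $\pi^{-1}(B')^0$ is a parabolic subgroup of $G^0$ — in fact a Borel subgroup, since the kernel is contained in every Borel — and it is normalized by $u$: indeed, for $n \in \ker\pi$, $u n u^{-1}$ need not equal $n$, but $u$ normalizes $\ker\pi$ (since $\ker\pi = N$ is normal in $G$) and $\pi(u B'' \pi^{-1} ... )$ — let me phrase this cleanly: $u$ normalizes $\pi^{-1}(B')$ because $\pi(u\,\pi^{-1}(B')\,u^{-1}) = \pi(u)\,B'\,\pi(u)^{-1} = B'$ and $\pi^{-1}(B') \supseteq \ker\pi$, so $u\,\pi^{-1}(B')\,u^{-1} = \pi^{-1}(B')$; hence $u$ normalizes its identity component, which is a Borel subgroup of $G^0$. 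By uniqueness in Proposition~\ref{prop:regunipcrit} applied to $G$, this Borel subgroup equals $B$, whence $B' = \pi(B)$. Thus $\pi(u)$ normalizes a unique Borel subgroup of $(G')^0$, and since $\pi(u)$ is clearly unipotent (homomorphisms send unipotent elements to unipotent elements), Proposition~\ref{prop:regunipcrit} gives that $\pi(u)$ is regular unipotent in $G'$.

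The main obstacle I anticipate is handling the bookkeeping around non-connectedness and the possibly-disconnected kernel $N$: one must confirm that $\pi$ really does restrict to a bijection between the Borel subgroups of $G^0$ and those of $(G')^0$, and that the component containing $u$ in $G$ maps into the component containing $\pi(u)$ in $G'$ compatibly (so that "Borel subgroup of $G^0$ normalized by $u$" on the source side corresponds correctly to the target side). This is essentially routine structure theory — a Borel subgroup $B$ of $G^0$ contains a maximal torus and all positive root subgroups, its image $\pi(B)$ is a connected solvable subgroup of maximal dimension among such in $(G')^0$ by a dimension count using $\dim \ker\pi = \dim(\ker\pi \cap B)$ (which holds because $\ker\pi^0 \subseteq B$ as $\ker\pi^0$ is a connected normal subgroup, hence contained in every Borel of $G^0$), and preimages of Borels of $(G')^0$ are parabolic of the same "corank", forcing them to be Borel — but it needs to be stated carefully. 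Once that correspondence is in hand, the normalizer argument above goes through verbatim. An alternative, even slicker route avoiding some of this: factor $\pi$ as a composite of the quotient by $\ker\pi^0$ (to which one can apply a limit/$G$-cr argument as in the proof of Lemma~\ref{lem:quasiclass}, or simply note $\ker\pi^0 \subseteq B$ for every Borel) followed by a quotient by a finite central-ish subgroup (close to the isogeny case of Remark~\ref{rem:isog}); I would only pursue this if the direct Borel-correspondence statement turns out to be awkward to cite.
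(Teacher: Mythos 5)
Your argument hinges on the claim that $\pi$ induces a bijection between Borel subgroups of $G^0$ and of $(G')^0$, justified by ``preimages of Borels are Borels since the kernel lies in every Borel.'' This is false in exactly the cases that matter: the identity component of $\ker\pi$ is a connected normal subgroup of $G^0$, so it is typically a product of simple factors of $G^0$ together with a central torus, and a simple factor is contained in no Borel subgroup. Concretely, for $\pi\colon \SL_2\times\SL_2\to\SL_2$ the first projection, $\pi^{-1}(B')^0=B'\times\SL_2$ is a proper parabolic subgroup that is not a Borel, and infinitely many Borel subgroups of $G$ have the same image $B'$. Your hedges about $N^0$ being solvable or ``containing the image of $N^0$ under the projection to $[G^0,G^0]$'' do not address this; the step ``hence $u$ normalizes its identity component, which is a Borel subgroup of $G^0$ \dots\ by uniqueness this equals $B$'' therefore does not go through, and the fallback sketched at the end (``simply note $\ker\pi^0\subseteq B$ for every Borel'') fails for the same reason.

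The gap is repairable with tools already in the paper, but by a different mechanism than a Borel--Borel bijection. Given a Borel subgroup $B'$ of $(G')^0$ normalized by $\pi(u)$, the subgroup $P':=\pi^{-1}(B')\cap G^0$ is a \emph{parabolic} subgroup of $G^0$ normalized by $u$; by Lemma~\ref{lem:containsB} it contains the unique Borel subgroup $B$ of $G^0$ normalized by $u$. Two such $B'_1,B'_2$ are conjugate in $(G')^0$, hence $P'_1,P'_2$ are conjugate parabolic subgroups of $G^0$ containing the common Borel $B$, hence equal (the fact invoked in Lemma~\ref{lem:regsub}), and $B'_i=\pi(P'_i)$ then gives $B'_1=B'_2$; existence comes from Lemma~\ref{lem:fixedborel} and one concludes with Proposition~\ref{prop:regunipcrit}. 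The paper instead reduces via Remark~\ref{rem:isog} to a connected kernel and then to a decomposition $G=M\ltimes N$ with $G^0=M^0\times N^0$, so that Borel subgroups of $G$ normalized by $u$ factor as $BB_i$ and uniqueness in $G$ forces uniqueness in the quotient. Either route works; yours, as written, does not.
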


\begin{proof}
 Let $N= \ker \pi$ (set-theoretic kernel).  The canonical projection factors as $G\to G/N^0\to (G/N^0)/(N/N^0)$, so we can assume without loss by Remark~\ref{rem:isog} that $N= N^0$.  By \cite[Lem.~2.6]{BMR:commuting}, there exists a subgroup $M$ of $G$ such that $MN= G$, $M\cap N$ is a finite normal subgroup of $M$, $M^0\cap N$ is central in both $M^0$ and $N^0$, and $M^0$ commutes with $N$; in particular, $M$ is normal in $G$.  
 By Remark~\ref{rem:isog}, we can assume that $G= M\ltimes N$, $G'= M$ and $G^0= M^0\times N^0$.
 
 Let $B_1$ and $B_2$ be Borel subgroups of $G/N$ normalized by $\pi(u)$.  
 By the previous paragraph, we may regard $B_1$ and $B_2$ as subgroups of $M$ normalized by the conjugation action of $u$ on $M$.  
 There is also a Borel subgroup $B$ of $N$ normalized by the action of $u$ on $N$, by Lemma~\ref{lem:fixedborel}.  
 Clearly $BB_1$ and $BB_2$ are Borel subgroups of $G$ normalized by $u$.  
 Since $u$ is regular unipotent in $G$, $BB_1= BB_2$ by Proposition~\ref{prop:regunipcrit}.  
 Hence $B_1= B_2$.  
 Another application of Proposition~\ref{prop:regunipcrit} and Lemma~\ref{lem:fixedborel} gives that $\pi(u)$ is regular unipotent in $G/N$, as required.
\end{proof}

\section{Proof of Theorem~\ref{thm:main}}
\label{sec:proof}

The proof of Theorem~\ref{thm:main} follows the same lines as the connected case Theorem~\ref{thm:connected}.  
The crucial point is to show that any R-parabolic subgroup of $G$ containing $H$
must contain the unipotent radicals of a pair of opposite parabolic subgroups of $G^0$.
Our next results indicate how this allows us to deduce part (iii) of Theorem~\ref{thm:main}.

\begin{lem}
\label{lem:uniprad}
Let $H$ be a reductive subgroup of $G$.  
Let $u\in H$ be a regular unipotent element of $G$ and let $Q$ be an R-parabolic subgroup
of $G$ containing $u$.
Then every R-parabolic subgroup of $G$ containing $H$ also contains $R_u(Q)$.
\end{lem}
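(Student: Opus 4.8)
The plan is to reduce everything to the single Borel subgroup canonically attached to $u$. Since $u$ is a regular unipotent element of $G$, Proposition~\ref{prop:regunipcrit} provides a \emph{unique} Borel subgroup $B$ of $G$ normalized by $u$. I would then aim to establish two inclusions: first, $R_u(Q)\subseteq B$; and second, $B\subseteq P$ for every R-parabolic subgroup $P$ of $G$ with $H\subseteq P$. Granting both, $R_u(Q)\subseteq B\subseteq P$ for every such $P$, which is exactly the claim.

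For the first inclusion I would argue as follows. Since $u\in Q$, Lemma~\ref{lem:containsB} gives $B\subseteq Q^0$; as $B$ is a Borel subgroup of $G^0$ contained in the parabolic subgroup $Q^0$, it is a Borel subgroup of $Q^0$. The group $R_u(Q)$ is connected, unipotent, and normal in $Q$, hence normal in $Q^0$, so it is contained in the unipotent radical $R_u(Q^0)$; since $R_u(Q^0)$ lies in every Borel subgroup of $Q^0$, we get $R_u(Q)\subseteq R_u(Q^0)\subseteq B$. For the second inclusion: given an R-parabolic subgroup $P$ of $G$ with $H\subseteq P$, we have $u\in H\subseteq P$, so a second application of Lemma~\ref{lem:containsB}, this time to $P$, yields $B\subseteq P^0\subseteq P$.

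I do not anticipate a genuine obstacle: morally the lemma is just two applications of Lemma~\ref{lem:containsB}, and it reproduces the first half of the argument used to prove Theorem~\ref{thm:connected} (where one shows every parabolic subgroup containing $H$ must contain the Borel subgroup normalized by $u$). The only step warranting a moment's care is the inclusion $R_u(Q)\subseteq B$: one must observe that $R_u(Q)$ is normal in the \emph{connected} group $Q^0$ before locating it inside $R_u(Q^0)$, and hence inside every Borel subgroup of $Q^0$. It is also worth noting that reductivity of $H$ plays no role in this particular argument --- only $u\in H$ and $H\subseteq P$ are used --- which fits with the fact that this lemma will presumably be applied twice in the sequel, once for $u$ and once for a regular unipotent element whose associated Borel subgroup is opposite to $B$, in order to capture the unipotent radicals of a pair of opposite parabolic subgroups of $G^0$ as the proof of Theorem~\ref{thm:main} requires.
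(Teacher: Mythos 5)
Your proposal is correct and follows essentially the same route as the paper: both proofs identify the unique Borel subgroup $B$ normalized by $u$, apply Lemma~\ref{lem:containsB} to $Q$ to get $R_u(Q)=R_u(Q^0)\subseteq R_u(B)\subseteq B$, and apply it again to any R-parabolic subgroup $P\supseteq H$ to get $B\subseteq P$. Your closing observations (that reductivity of $H$ is not used, and that the lemma is applied to a pair of opposite parabolic subgroups later) also match how the paper deploys the result.
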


\begin{proof}
Let $B$ be the unique Borel subgroup of $G^0$ normalized by $u$.  
Then $B\subseteq Q^0$ by Lemma~\ref{lem:containsB}, so $R_u(Q^0) = R_u(Q)\subseteq R_u(B)$.  
Now suppose $P$ is an R-parabolic subgroup of $G$ containing $H$.  
Then $u\in P$, so $B\subseteq P$, again by Lemma~\ref{lem:containsB}, so $R_u(Q)\subseteq P$,
as required.  
\end{proof}

\begin{prop}
\label{prop:opp}
 Let $H$ be a reductive subgroup of $G$ which meets every connected component of $G$.  
 Let $u_1,u_2\in H$ be regular unipotent elements of $G$.  
 Suppose there are R-parabolic subgroups $P_1,P_2$ of $G$ such that: 
 \begin{itemize}
 \item[(i)] $P_1$ and $P_2$ do not contain any simple component of $G^0$;
 \item[(ii)] $u_1\in P_1$, $u_2\in P_2$;
 \item[(iii)] $P_1^0$ and $P_2^0$ are opposite parabolic subgroups of $G^0$.
 \end{itemize}  
 Then $H$ is $G$-irreducible.
\end{prop}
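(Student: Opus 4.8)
The plan is to mimic the endgame of the proof of Theorem~\ref{thm:connected}: show that any R-parabolic subgroup $P$ of $G$ containing $H$ must in fact be all of $G$. Suppose $P$ is such an R-parabolic. By Lemma~\ref{lem:uniprad} applied with $u_1 \in P_1$ and then with $u_2 \in P_2$, the subgroup $P$ contains both $R_u(P_1)$ and $R_u(P_2)$. Since $P_1^0$ and $P_2^0$ are opposite parabolic subgroups of $G^0$ by hypothesis~(iii), we may apply Lemma~\ref{lem:opp_rads} to $P_1^0$ and $P_2^0$: the subgroup $M$ of $G$ generated by $R_u(P_1^0) \cup R_u(P_2^0)$ is connected and $G$-completely reducible, and, crucially, because hypothesis~(i) says neither $P_1$ nor $P_2$ contains any simple factor of $G^0$, we get $M = [G^0, G^0]$. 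Note $R_u(P_i) = R_u(P_i^0)$ since an R-parabolic and its identity component have the same unipotent radical, so $M \subseteq P$. Hence $P \supseteq [G^0,G^0]$.

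Now I would deduce $P = G$ from this containment together with the hypothesis that $H$ meets every connected component of $G$. Since $P \supseteq [G^0,G^0]$ and $P$ is an R-parabolic of $G$, write $P = P_\lambda$ for some $\lambda \in Y(G)$; then $R_u(P_\lambda)$ is generated by root subgroups on which $\lambda$ is strictly positive, but all root subgroups of $G^0$ lie in $[G^0,G^0] \subseteq L_\lambda$ (as $P \supseteq [G^0,G^0]$ forces $\lambda$ to pair to zero with every root), so $R_u(P_\lambda) = 1$ and $P = L_\lambda$ is an R-Levi subgroup; moreover $P^0 = G^0$. To finish, observe that $L_\lambda = C_G(\Im \lambda)$ with $\lambda \in Y(Z(G^0)^0)$ (since $\lambda$ centralizes $G^0$), so $P$ is a normal subgroup of $G$ containing $G^0$. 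Because $H$ meets every connected component of $G$ and $H \subseteq P$, the subgroup $P$ meets every component of $G$ as well; combined with $P^0 = G^0$ this gives $P = G$. Therefore $H$ lies in no proper R-parabolic subgroup of $G$, i.e.\ $H$ is $G$-irreducible.

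The only mildly delicate point — and the step I would be most careful about — is the passage from ``$P$ is an R-parabolic containing $[G^0,G^0]$ and $P$ meets every component of $G$'' to ``$P = G$''. One must make sure that containing the full derived group of $G^0$ really does force the unipotent radical of $P$ to be trivial (this is where the root-group description of $R_u(P_\lambda)$ recalled in the Cocharacters subsection is used) and that the component-meeting hypothesis is genuinely needed and genuinely suffices to upgrade $P^0 = G^0$ to $P = G$; this is exactly the phenomenon flagged in the discussion after Lemma~\ref{lem:N_G(P)}, where R-parabolics with $P_\lambda = L_\lambda \subsetneq G$ and $P_\lambda^0 = G^0$ are noted to exist when $Z(G^0)$ is not central in $G$. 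Everything else is a direct concatenation of Lemmas~\ref{lem:containsB}, \ref{lem:uniprad} and \ref{lem:opp_rads}.
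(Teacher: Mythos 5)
Your proposal is correct and takes essentially the same route as the paper's proof: apply Lemma~\ref{lem:uniprad} to each $u_i$ to get $R_u(P_1)\cup R_u(P_2)\subseteq P$, invoke Lemma~\ref{lem:opp_rads} to conclude $[G^0,G^0]\subseteq P$, and then use the component-meeting hypothesis to force $P=G$. The paper passes from $[G^0,G^0]\subseteq P$ to $G^0\subseteq P$ more tersely (the parabolic $P^0$ of $G^0$ contains a maximal torus, hence $Z(G^0)^0$), whereas you justify the same step via the root pairing; the aside about $P$ being normal in $G$ is unneeded (and not automatic), but your actual conclusion only uses $P^0=G^0$ together with $P$ meeting every component, which is sound.
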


\begin{proof}
Suppose $P$ is an R-parabolic subgroup of $G$ containing $H$.  
Then $u_1, u_2\in P$, so $R_u(P_1)\cup R_u(P_2)\subseteq P$, by Lemma \ref{lem:uniprad}.
But $P_1^0$ and $P_2^0$ are opposite parabolic subgroups of $G^0$ that do not contain any simple component of $G^0$, so $R_u(P_1^0)\cup R_u(P_2^0)$ generates $[G^0,G^0]$, by Lemma~\ref{lem:opp_rads}.  
Hence $P\supseteq G^0$.  
Since $H$ meets every connected component of $G$, we therefore have $P=G$.  
This shows that $H$ is $G$-ir, as claimed.
\end{proof}

Armed with these results we now address the main theorem.

\begin{proof}[Proof of Theorem~\ref{thm:main}]
We first note that if $H^0$ is a torus, then (i) holds automatically, cf.~\cite[Prop.~3.20; Sec.~6.3]{BMR}, and (ii) and (iii) are not relevant, so we may assume that $H^0$ is not a torus for the remainder of the proof. This means in particular that the Borel subgroups in $H^0$ are proper.

We begin with some general observations.
Let $u_1\in H$ be a regular unipotent element of $G$. 
Then $u_1$ is also regular in $H$, by Lemma~\ref{lem:regsub},
and hence there is a unique Borel subgroup $B$ of $H$ normalized by $u_1$, by Proposition~\ref{prop:regunipcrit}.
By Lemma~\ref{lem:N_G(P)}, $N_H(B)$ is an R-parabolic subgroup of $H$, so we may choose a maximal torus $S$ of $B$ and a cocharacter $\lambda\in Y(S)$ with $N_H(B) = P_\lambda(H)$.
Let $B^-$ be the opposite Borel subgroup of $H^0$ such that $B\cap B^- = S$.
We claim that $P_{-\lambda}(H) = N_H(B^-)$. 
It is easy to see that $P_{-\lambda}(H)^0 = P_{-\lambda}(H^0) = B^-$, so $P_{-\lambda}(H)\subseteq N_H(B^-)$.
Now note that $P_\lambda$ and $P_{-\lambda}$ have the same number of components, since $L_\lambda = L_{-\lambda}$.
Further, since $B$ and $B^-$ are conjugate by an element $x\in N_{H^0}(S)$, the normalizers $N_H(B)$ and $N_H(B^-)$ are conjugate by $x$ too; this implies that $N_H(B^-)$ has the same number of components as $N_H(B)$.
Thus $N_H(B^-)$ and $P_{-\lambda}(H)$ have the same number of components, and we have proved the claim.
By setting $u_2 := xu_1x^{-1}\in P_{-\lambda}(H)$ we obtain another regular unipotent element of $G$ and $H$; note that $B^-$ is the unique Borel subgroup of $H$ normalized by $u_2$.

We can now prove part (i).
We argue by contradiction. Suppose $H^0$ is not $G$-cr.
Then $[H^0,H^0]$ is not $G$-cr either, by Lemma~\ref{lem:derivedcr},
and so we may apply Theorem~\ref{thm:optpar} to $[H^0,H^0]$ and let $P$ be an R-parabolic subgroup of $G$ with the properties given there.
Since $H$ normalizes $[H^0,H^0]$, we have $H\subseteq P$ by Theorem~\ref{thm:optpar}(ii).  
Keeping the notation from the previous paragraph, let $P_1:= P_\lambda$ and $P_2 := P_{-\lambda}$;
then Lemma \ref{lem:uniprad} implies that $R_u(P_1)$ and $R_u(P_2)$ are contained in $P$,
and hence the subgroup $M$ generated by $R_u(P_1)$ and $R_u(P_2)$ is contained in $P$.
Since $M$ is $G$-cr by Lemma~\ref{lem:opp_rads}, there is an R-Levi subgroup $L$ of $P$ containing $M$.
But $M$ contains $R_u(P_1)\cap H = R_u(P_\lambda(H))$ and $R_u(P_2)\cap H = R_u(P_{-\lambda}(H))$, which are the unipotent radicals of opposite Borel subgroups of $H$.
Thus $M$ contains all the root groups of $H^0$ with respect to the maximal torus $S$, and hence $[H^0,H^0]\subseteq M\subseteq L$.
This contradicts Theorem~\ref{thm:optpar}(i), and this contradiction completes the proof.

Now we prove (ii) and (iii).
To do so, we may replace $G$ with the subgroup $\widetilde{G}$ generated by $G^0$ and $H$, since  (iii) holds for $H$ in $\widetilde{G}$ if and only if (ii) holds for $H$ in $G$.
Thus we may also assume that $H$ meets every component of $G$.
Note that $P_1= P_\lambda$ and $P_2 = P_{-\lambda}$ satisfy hypotheses (ii) and (iii) of Proposition~\ref{prop:opp}.
Suppose that, in addition, the projection of $H^0$ to each simple factor of $G$ is not a torus.  
It follows from Lemma~\ref{lem:torimage} applied to each of these projection maps that $P_1$ and $P_2$ do not contain any simple factor of $G^0$.  Hence hypothesis (i) of Proposition~\ref{prop:opp} holds, so we may conclude from Proposition~\ref{prop:opp} that $H$ is $G$-ir.  This completes the proof of Theorem~\ref{thm:main}.
\end{proof}

As noted in the introduction, Theorem~\ref{thm:main}(i) holds without the more restrictive hypotheses needed for parts (ii) and (iii); this allows us to give the following interesting corollary.

\begin{cor}\label{cor:connectedcr}
Suppose $H$ is a connected reductive subgroup of $G$. If $H$ is normalized by a regular unipotent element of $G$, then $H$ is $G$-completely reducible.
\end{cor}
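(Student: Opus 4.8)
The plan is to derive this from Theorem~\ref{thm:main}(i) by absorbing the normalizing element into the group. If $p=0$ there is nothing to prove, since over a field of characteristic $0$ every reductive subgroup of $G$ is $G$-completely reducible (cf.\ Remark~\ref{rem:char0}); so assume $p>0$. Let $u$ be a regular unipotent element of $G$ with $u\in N_G(H)$, and let $\widehat H$ be the subgroup of $G$ generated by $H$ and $u$. Since $u$ normalizes $H$, we have $\widehat H=H\langle u\rangle$, and $H$ is normal in $\widehat H$.

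Because $p>0$, the unipotent element $u$ has finite order, so $\widehat H=\bigcup_i Hu^i$ is a finite union of cosets of $H$; hence $\widehat H$ is a closed subgroup of $G$ containing $H$ with finite index. As $H$ is connected, this forces $\widehat H^0=H$. Next I would check that $\widehat H$ is reductive: its unipotent radical $R_u(\widehat H)$ is a connected normal unipotent subgroup, so it lies in $\widehat H^0=H$, and being normal in $\widehat H$ it is in particular normal in $H$; therefore $R_u(\widehat H)\subseteq R_u(H)=1$. Thus $\widehat H$ is a (possibly non-connected) reductive subgroup of $G$ containing the regular unipotent element $u$ of $G$, and Theorem~\ref{thm:main}(i) applies to give that $\widehat H^0=H$ is $G$-completely reducible, as required.

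The one genuinely delicate point is the verification that $\widehat H$ is reductive, and it is precisely here that the standing hypothesis $p>0$ is used: it ensures that $u$ has finite order, which forces $\widehat H^0=H$ and thereby confines $R_u(\widehat H)$ inside the reductive group $H$. In characteristic $0$ the Zariski closure $\overline{\langle u\rangle}\cong\mathbb{G}_{a}$ is connected and infinite, so $\widehat H$ is usually no longer reductive and this reduction breaks down --- but there the statement is automatic, as already observed. Apart from this case distinction, the proof is a direct invocation of Theorem~\ref{thm:main}(i).
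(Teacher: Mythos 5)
Your proposal is correct and follows essentially the same route as the paper: reduce to the case where $u$ has finite order via the characteristic-$0$ equivalence of reductivity and complete reducibility, form the group generated by $H$ and $u$, observe it is reductive with identity component $H$, and apply Theorem~\ref{thm:main}(i). You merely spell out the verification that $\widehat H^0=H$ and $R_u(\widehat H)=1$, which the paper leaves implicit.
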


\begin{proof}
Let $u$ be a regular unipotent element of $G$ normalizing $H$.
Since reductivity and complete reducibility are equivalent in characteristic $0$ (see \cite[Prop.~4.2]{serre2}, \cite[Sec.~2.2, Sec.~6.3]{BMR}), we may assume that $u$ has finite order.
Let $K$ be the subgroup of $G$ generated by $u$ and $H$; then $K^0 = H$, so $K$ is reductive.
Since $K$ contains $u$, Theorem~\ref{thm:main}(i) applied to $K$ gives the result.
\end{proof}

We finish this section by proving the analogue of Theorem~\ref{thm:main} in the presence of a Steinberg endomorphism $\sigma$ of $G$, generalizing Theorem~\ref{thm:finite}.

\begin{prop}
	\label{prop:steinbergendgeneral}
Let $H \subseteq G$ be reductive algebraic groups (possibly non-connected). 
Let $\sigma$ 
be a Steinberg endomorphism of $G$ 
with $\sigma(H)\subseteq H$.  Suppose the projection of $H^0$ onto each simple factor of $G^0$ is not a torus.
If some $\sigma$-stable connected component $X$ of $H$ contains a regular unipotent element of $G$, then $H_\sigma$ does not normalize any proper parabolic subgroup of $G^0$.
If, moreover, $H_\sigma$ meets every connected component of $G$, then $H_\sigma$ is $G$-irreducible.
\end{prop}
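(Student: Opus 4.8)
The plan is to mimic the proof of Theorem~\ref{thm:finite} but replace the use of Theorem~\ref{thm:connected} with the machinery assembled for the proof of Theorem~\ref{thm:main} --- namely Lemma~\ref{lem:uniprad}, Lemma~\ref{lem:opp_rads} and Proposition~\ref{prop:opp}. As in the proof of Theorem~\ref{thm:main}, we may first dispose of the case where $H^0$ is a torus (here the projection hypothesis forces $H^0$ not to be a torus anyway, so there is nothing to do), and we may pass to $\widetilde{G} := \langle G^0, H_\sigma\rangle$, noting that $\sigma$ restricts to a Steinberg endomorphism of $\widetilde{G}$; this reduces the $G$-irreducibility statement to showing that $H_\sigma$ normalizes no proper parabolic subgroup of $G^0$, under the extra assumption that $H_\sigma$ meets every component of $\widetilde{G}$.

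First I would produce a good regular unipotent element inside $H_\sigma$. Fix a regular unipotent element of $G$ lying in the $\sigma$-stable component $X$ of $H$. By Lemma~\ref{lem:regsub} it is regular unipotent in $H$, so by Proposition~\ref{prop:regunipcrit} (applied to $H$) it normalizes a unique Borel subgroup of $H^0$; hence the regular unipotent elements of $G$ lying in $X$ form a single $H^0$-conjugacy class, which is $\sigma$-stable. Applying \cite[III.1.19]{springersteinberg} to this $\sigma$-stable class inside the (possibly non-connected) group $\langle \sigma\text{-action}, H\rangle$ --- or, more carefully, to the connected group $H^0$ acting on the $\sigma$-stable variety $X\cap\{\text{regular unipotent}\}$ --- yields a regular unipotent element $u_1 \in H_\sigma \cap X$. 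The unique Borel subgroup $B$ of $H^0$ normalized by $u_1$ is then $\sigma$-stable by uniqueness. By Lemma~\ref{lem:fixedboreltorus}(i) there is a $\sigma$-stable maximal torus $S \subseteq B$; let $B^-$ be the opposite Borel subgroup of $H^0$ with $B\cap B^- = S$, which is again $\sigma$-stable. By Lemma~\ref{lem:fixedboreltorus}(ii), $B$ and $B^-$ are conjugate by some $x \in (H^0)_\sigma$, and then $u_2 := x u_1 x^{-1} \in H_\sigma$ is a regular unipotent element of $G$ lying in $B^-$.

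Next I would set up the R-parabolic subgroups of $G$ and apply Proposition~\ref{prop:opp}. Choose $\lambda \in Y(S)$ with $N_H(B) = P_\lambda(H)$; as in the proof of Theorem~\ref{thm:main}, $P_{-\lambda}(H) = N_H(B^-)$, and in particular $u_1 \in P_\lambda$ and $u_2 \in P_{-\lambda}$, while $P_\lambda^0$ and $P_{-\lambda}^0$ are opposite parabolic subgroups of $G^0$. Since the projection of $H^0$ onto each simple factor of $G^0$ is not a torus, Lemma~\ref{lem:torimage} shows that neither $P_\lambda$ nor $P_{-\lambda}$ contains any simple factor of $G^0$. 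Thus $P_1 := P_\lambda$, $P_2 := P_{-\lambda}$ together with $u_1, u_2 \in H_\sigma$ satisfy hypotheses (i), (ii), (iii) of Proposition~\ref{prop:opp} --- provided $H_\sigma$ meets every component of $G$. For the $G$-irreducibility statement that hypothesis is assumed, so Proposition~\ref{prop:opp} gives that $H_\sigma$ is $G$-irreducible. For the weaker first statement, we instead argue exactly as in the proof of Proposition~\ref{prop:opp}: any R-parabolic subgroup $P$ of $G^0$ normalized by $H_\sigma$ has $N_{\widetilde G}(P)$ an R-parabolic of $\widetilde G$ containing $H_\sigma$ (Lemma~\ref{lem:N_G(P)}), hence containing $u_1$ and $u_2$, hence containing $R_u(P_1) \cup R_u(P_2)$ by Lemma~\ref{lem:uniprad}, hence containing $[G^0,G^0]$ by Lemma~\ref{lem:opp_rads}; since $R_u(P)$ contains no simple factor of $G^0$ this forces $P = G^0$.

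The only genuine subtlety is the first step: extracting a $\sigma$-fixed regular unipotent element of $G$ from the $\sigma$-stable $H^0$-conjugacy class of such elements in $X$. This is where Lang's theorem (in the guise of \cite[III.1.19]{springersteinberg}) enters, and one must be a little careful because $X$ is a component of $H$ rather than of $H^0$; the right framework is to apply the Lang--Steinberg argument to the connected group $H^0$ acting on the $\sigma$-stable variety $X$, using that the relevant class is a single $H^0$-orbit with connected (indeed trivial-modulo-center) stabilizer, exactly as in the proof of Theorem~\ref{thm:finite}. Once that element is in hand, the rest is a routine transcription of the proofs of Theorem~\ref{thm:main} and Proposition~\ref{prop:opp}.
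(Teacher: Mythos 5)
Your proposal is correct and follows essentially the same route as the paper: use Lang--Steinberg to find a $\sigma$-fixed regular unipotent element $u_1$ in the class of such elements in $X$, deduce that the unique Borel subgroup $B$ of $H$ it normalizes is $\sigma$-stable, choose $S$, $B^-$ and the conjugating element $x$ to be $\sigma$-fixed via Lemma~\ref{lem:fixedboreltorus}, and then run the $P_\lambda$, $P_{-\lambda}$ argument of Theorem~\ref{thm:main} (via Lemmas~\ref{lem:uniprad}, \ref{lem:opp_rads}, \ref{lem:torimage} and Proposition~\ref{prop:opp}) with $u_1,u_2\in H_\sigma$. The only cosmetic difference is that the final step should conclude $P=G^0$ from $P=N_G(P)^0\supseteq[G^0,G^0]$ rather than from a condition on $R_u(P)$, but this does not affect the argument.
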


\begin{proof}
Suppose $X$ is a $\sigma$-stable connected component of $H$ containing a regular unipotent element $u$ of $G$.  Then $u$ is a regular unipotent element of $H$ by Lemma~\ref{lem:regsub}.  Hence the regular unipotent elements of $H$ in $X$ are the regular unipotent elements of $G$ belonging to $X$, since these elements form a single $H^0$-conjugacy class $C$ in $X$, \cite[II.10.1]{spaltenstein}.  By \cite[I.2.7]{springersteinberg}, $C$ contains a $\sigma$-fixed point $u_1$.

We now follow the proof of Theorem~\ref{thm:main} above, taking $u_1\in H_\sigma$.  Since $\sigma(u_1) = u_1$ and $B$ is the unique Borel subgroup of $H$ normalized by $u_1$, $B$ must be $\sigma$-stable.
Hence we may choose $S$ and $B^-$ in the proof to be $\sigma$-stable as well, and
the element $x\in N_{H^0}(S)$ conjugating $B$ to $B^-$ can be chosen in $H_\sigma$, by Lemma~\ref{lem:fixedboreltorus}.
So $u_2\in H_\sigma$ as well.
Now the rest of the proof goes through unchanged: we conclude that $H_\sigma$ does not normalize any proper parabolic subgroup of $G^0$, 
and if $H_\sigma$ meets every component of $G$ then $H_\sigma$ is $G$-ir.
\end{proof}

\section{Further discussion}
\label{sec:complements}

We finish the paper with a discussion of some extensions to the main result and some examples to illustrate other points of interest.  We start by exploring the limits on the hypotheses on $H$ and $H^0$ in Theorem~\ref{thm:main}. 

\begin{rem}
	\label{rem:char0}
Recall that in characteristic $0$ a subgroup is $G$-cr if and only if it is reductive, \cite[Prop.\ 4.2]{serre2}, \cite[Sec.~2.2, Sec.~6.3]{BMR}.
Hence Theorem~\ref{thm:main} follows quickly in characteristic $0$ from Corollary~\ref{cor:GcrGir}.
There is a similar equivalence between complete reducibility and reductivity in positive characteristic $p$ if the index of $H^0$ in $H$ is coprime to $p$ and 
$p$ is sufficiently large relative to the rank of $G$, see \cite[Thm.\ 4.4]{serre2}.
Thus the conclusion of Theorem~\ref{thm:main}(i) is particularly interesting in ``small characteristics'', where it doesn't simply follow from the reductivity of $H$,
and the conclusions of parts (ii) and (iii) are of particular note when the index of $H^0$ in $H$ is divisible by $p$.
\end{rem}

\begin{rem}
	\label{rem:maxtorus}
As noted in Remark~\ref{rem:main}(iv), some restrictions on $H^0$ are necessary for Theorem~\ref{thm:main} to hold.
When $G^0$ is simple, non-$G$-irreducible examples of subgroups $H$ of $G$ containing a regular unipotent element must have $H^0$ a torus; we refer to \cite[Sec.~7]{MT} for such examples. 

On the other hand, we also note that there are plenty of examples where $H^0$ is a torus and yet the conclusions of Theorem~\ref{thm:main} \emph{do} hold.
For example, suppose $H^0$ is a regular torus of $G$ (i.e., one containing a regular semisimple element of $G$, \cite[IV.13.1]{borel}), so that $T = C_G(H^0)^0$ is a maximal torus of $G$.  Then $N_G(H^0)^0= C_G(H^0)^0 = T$, so $N_G(H^0)$ is a finite extension of $T$ and every subgroup of $N_G(H^0)$ is $N_G(H^0)$-cr; in particular, $H$ is $N_G(H^0)$-cr.  Since $H^0$ is $G$-cr by \cite[Prop.~3.20, Sec.~6.3]{BMR}, it follows from 
\cite[Cor.~3.3]{BMR:commuting} applied to the inclusion of $H^0$ in $H$
that $H$ is $G$-cr.  Hence by Corollary~\ref{cor:GcrGir} the conclusions of Theorem~\ref{thm:main}(ii) and (iii) hold in this case even though the hypotheses do not.
For a concrete example of this phenomenon, let $H$ be the normalizer of the maximal torus in $G = \SL_2$ when the characteristic is $2$: then $H$ contains 
a regular unipotent element and is $G$-ir.
\end{rem}

\begin{rem}
 Let $H \subseteq G$ be reductive algebraic groups (possibly non-connected) and suppose $H$ contains a regular unipotent element of $G$.  If there is a nontrivial normal unipotent subgroup $N$ of $H$ such that $N\subseteq G^0$ then the conclusions of Theorem~\ref{thm:main} fail for $H$: for $N$ is not $G^0$-cr, so $N$ is not $G$-cr, so $H$ is not $G$-cr \cite[Thm.~3.10]{BMR}.  The ${\rm PSL}_2(p)$ examples we discussed after Theorem~\ref{thm:finite} show, however, that the conclusions of Theorem~\ref{thm:main} can fail even when $H$ is finite simple.
\end{rem}

\begin{rem}
For simple $G$ and semisimple $H$, the classification in \cite[Thm. 1.4]{TZ} says that apart from some obvious classical group cases, such as $\SP_{2n}\subseteq \SL_{2n}$,
pairs $H\subseteq G$ with $H$ containing a regular unipotent element of $G$ are rare.
However, if $G$ is not simple and $G$ and $H$ are allowed to be disconnected, 
then it is much harder to keep track of the possibilities in a systematic way; see also our next example.
\end{rem}

\begin{ex}\label{ex:infinitelymany}
 The following example shows that a given regular unipotent element of $G$ can belong to infinitely many distinct connected reductive overgroups.  Let $G= \SL_2\times \SL_2$ and assume ${\rm char}(k)= p> 0$.  Fix unipotent $1\neq v\in \SL_2({\mathbb F}_p)$ and let $u= (v,v)$.  For each power $q$ of $p$, define $H_q$ to be the image of $\SL_2$ under the twisted Frobenius diagonal embedding $g\mapsto (g, \sigma_q(g))$, where $\sigma_q$ is the standard $q$-power Frobenius map.  Then the $H_q$ are distinct --- in fact, they are pairwise nonconjugate --- and $u\in H_q$ for all $q$.
\end{ex}


\bigskip
\bigskip
\noindent {\bf Acknowledgements}:
We thank Donna Testerman for comments on an earlier version of the paper, and the referee for several suggestions which have improved the exposition.


\end{document}